\documentclass[reqno]{amsart}
\usepackage{amsmath}
\usepackage{amssymb, amsthm, amsfonts, tikz-cd, mathrsfs,mathtools,stmaryrd,enumitem, algorithmic,float,comment,tikz,hyperref}
\usepackage[toc]{appendix}
\usetikzlibrary{backgrounds}
\usetikzlibrary{decorations.pathreplacing}
\usepackage{fullpage}
\usepackage{xcolor}

\usepackage{tikz}
\usetikzlibrary{calc,decorations.pathreplacing}

\hypersetup{
  colorlinks   = true, 
  urlcolor     = blue, 
  linkcolor    = blue, 
  citecolor   = red 
}

\usepackage{comment}

\usepackage[capitalize,nameinlink]{cleveref}
\crefname{theorem}{Theorem}{Theorems}
\crefname{lemma}{Lemma}{Lemmas}
\crefname{claim}{Claim}{Claims}
\crefname{prop}{Proposition}{Propositions}
\crefname{figure}{Figure}{Figures}

\crefname{enumi}{}{}
\Crefname{enumi}{}{}
\crefformat{enumi}{(#2#1#3)}
\Crefformat{enumi}{(#2#1#3)}

\newtheorem{theorem}{Theorem}

\newtheorem{lemma}[theorem]{Lemma}
\newtheorem{claim}[theorem]{Claim}
\newtheorem{corollary}[theorem]{Corollary}

\newtheorem{conj}[theorem]{Conjecture}

\newtheorem{prop}[theorem]{Proposition}

\newtheorem*{claim*}{Claim}

\theoremstyle{remark}
\newtheorem*{remark*}{Remark}
\newtheorem{remark}[theorem]{Remark}

\theoremstyle{definition}
\newtheorem{definition}[theorem]{Definition}

\numberwithin{theorem}{section}

\renewcommand{\phi}{\varphi}

\newcommand{\E}{\mathbb{E}}

\def\1{\mathbbm{1}}

\renewcommand{\le}{\leqslant}
\renewcommand{\ge}{\geqslant}
\renewcommand{\P}{\mathbb{P}}

\newcommand{\R}{\mathbb R}

\newcommand{\lpr}[1]{\left(#1\right)}

\newcommand{\ceil}[1]{\lceil#1\rceil}
\newcommand{\floor}[1]{\lfloor#1\rfloor}
\newcommand{\ang}[1]{\langle#1\rangle}

\newcommand{\bin}{\{0, 1\}}
\newcommand{\supp}{\textup{supp}}

\newcommand{\F}{\mathbb{F}}
\newcommand{\Aff}{\textup{Aff}}

\newcommand{\PG}{\textup{PG}}
\newcommand{\Gr}{\textup{Gr}}
\newcommand{\rank}{\textup{rank}}

\usepackage[
backend=biber,
style=numeric,
maxnames=99,
giveninits=true
]{biblatex}
\renewbibmacro{in:}{%
  \ifentrytype{article}{}{\printtext{\bibstring{in}\intitlepunct}}}

\title
{Affine Subspace Statistics in the Hypercube} 

\author{Zixuan Xu}
\thanks{Massachusetts Institute of Technology. Email: \texttt{zixuanxu@mit.edu}}

\begin{document}

\begin{abstract}
    We study the intersection statistics of affine subspaces in the hypercube $\F_2^n$, motivated by recent work of Alon, Axenovich, and Goldwasser on the intersection statistics of axis-aligned subcubes of an $n$-dimensional cube. Let $d\ge 1$ and $0\le s\le 2^d$ be nonnegative integers. For a subset $A\subseteq \F_2^n$ where $n\ge d$, define $\lambda^*(n,d,s,A)$ to be the fraction of affine $d$-flats in $\F_2^n$ that intersect $A$ at exactly $s$ points. Let $\lambda^*(n,d,s) = \max_{A\subseteq \F_2^n}\lambda^*(n,d,s,A)$ and let $\lambda^*(d,s) = \lim_{n\to \infty}\lambda^*(n,d,s)$. We show that when $s = j\cdot 2^k$ with $j$ odd and $k\ge 1$, we have $\lambda^*(d,s)\to 1-\Theta(2^{-k})$ as $d\to \infty$. This implies that $\lambda^*(d,s)$ is controlled up to constant factors by the $2$-adic valuation of $s$ when $s$ is even. When $s$ is odd, we show that $\lambda^*(d,s)\le \frac{1}{2}$ in contrast to the behavior of axis-aligned subcube statistics. We also present several upper and lower bounds for certain specific values of $s$. 
\end{abstract}

\maketitle
\vspace{-1.2em}

\section{Introduction}

 Recently, work of Alon, Axenovich, and Goldwasser~\cite{alon2024hypercubestatistics} initiated the systematic study of the hypercube statistics problem defined as follows. For a subset $A\subseteq \bin^n$ and nonnegative integers $d\le n$ and $s\le 2^d$, define $\lambda(n,d,s,A)$ as the fraction of $d$-subcubes $Q_d\subseteq Q_n$ such that $|Q_d\cap A| = s$. Then let $\lambda(n,d,s):= \max_{A\subseteq Q_n}\lambda(n,d,s,A)$ and let $\lambda(d,s): =\lim_{n\to \infty} \lambda(n,d,s)$. Note that $\lambda(n,d,s)$ is monotone non-increasing in $n$ by a simple averaging argument, so the limit $\lambda(d,s)$ is well-defined. Except for very specific values of $s$ and a few pairs of small $s$ and $d$, the exact values for $\lambda(d,s)$ are poorly understood.

In this paper, we study the analogous intersection statistics problem for affine subspaces rather than axis-aligned subcubes. From this, we can also obtain results for the hypercube statistics problem. For nonnegative integers $n\ge d\ge 1$ and $s\le 2^d$, define for a subset $A\subseteq \F_2^n$ the affine subspace statistics $\lambda^*(n,d,s,A)$ as the fraction of affine $d$-subspaces $Q\subseteq \F_2^n$ (i.e. $Q = x_0 + U$ where $x_0\in \F_2^n$ and $U\subseteq \F_2^n$ is a $d$-dimensional linear subspace) such that $|Q\cap A| = s$. For convenience, we refer to an affine $d$-subspace as a $d$-flat from now on. Similarly we can define $\lambda^*(n,d,s):= \max_{A\subseteq \F_2^n}\lambda^*(n,d,s,A)$ and $\lambda^*(d,s):= \lim_{n\to \infty}\lambda^*(n,d,s)$. Note that the limit is well-defined because $\lambda^*(n,d,s)$ is also monotone non-increasing. Furthermore, observe that we have
\[\lambda^*(d,s)\le \lambda(d,s).\]
Indeed, taking a construction in the $d$-flat setting and picking the best coordinates would give a construction for $d$-cubes that performs as well as for $d$-flats. The flat setting removes the dependence on the coordinate system which allows the study of the intersection statistics problem under full affine symmetry. Thus, the flat setting provides a basis-invariant version that identifies the obstructions which any improvement in the cube setting must overcome by exploiting the coordinate-dependent structure.

Our main result is to show that for even $s$, we have
\[\lambda^*(d,s) = 1- \Theta(2^{-\nu_2(s)}),\]
where $\nu_2(s)$ denotes the exponent of the largest power of $2$ divisible by $s$ (i.e. the $2$-adic valuation of $s$). More specifically, we show the following theorem.

\begin{theorem}\label{thm:s-even}
 Let $d\ge 1$ and $1<s<2^d$. Suppose $s = j\cdot 2^k$ where $j$ is odd and $k\ge 0$, then we have
\[\lambda^*(d,s)\le  1-\frac{2}{3}(1-2^{-(d-k)})\cdot 2^{-k} + O(2^{-2k})+o_d(1).\]
In particular, for fixed $k$ and $d\to \infty$, we have $\lambda^*(d,s) \to 1-\frac{2}{3}\cdot 2^{-k} + O(2^{-2k})$.
\end{theorem}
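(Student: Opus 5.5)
The plan is to compare $d$-flats with the $(d-1)$-flats they contain, using the $(d+1)$-flats of $\F_2^n$ as a local test configuration, and to show that having almost all $d$-flats meet $A$ in exactly $s$ points forces almost all lower-dimensional flats to meet $A$ in exactly $s/2, s/4,\dots$ points. Since $s=j2^k$ with $j$ odd, this halving cannot continue below dimension $d-k$. Fix $A$ and $n$ large. For $0\le i\le d$ let $m_i=s\,2^{-(d-i)}$, and let $\varepsilon_i$ be the fraction of $i$-flats $Q$ with $|Q\cap A|\neq m_i$. Here $m_i$ is simply declared "bad" if it is not an integer, so $\varepsilon_i=1$ for $i<d-k$. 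Then $1-\lambda^*(n,d,s,A)=\varepsilon_d$, and the goal is a lower bound on $\varepsilon_d$.

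\textbf{Local configuration.} Pick a uniformly random $(i+1)$-flat $R$ and a uniformly random $(i-1)$-dimensional direction inside it. This splits $R$ into four $(i-1)$-flats $W_0,\dots,W_3$, with counts $a_0,\dots,a_3$. There are exactly six $i$-flats in $R$ that are unions $W_p\cup W_q$, and each $W_p$ lies in exactly three of them. By symmetry of the affine group, each of the six $i$-flats and each of the four $W_p$ is marginally uniform among $i$-flats, respectively $(i-1)$-flats.

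The key elementary fact concerns the case $a_0\neq m_{i-1}$, writing $a_0=m_{i-1}+\delta$ with $\delta\ne 0$. If one of the three pairs through $W_0$ is good, then its partner coset has count $m_{i-1}-\delta$. Enumerating cases then shows that at least three of the six unions are bad. More generally, if $t$ of the $W_p$ are bad, then at least $\tfrac32 t$ pairs are bad, up to small cases which I will check by hand. When $m_{i-1}$ is not an integer, all six pairs fail to be $m_i$ whenever... rather, each complementary pair $\{W_p\cup W_q, W_r\cup W_s\}$ has total count $2m_i$, so the two counts are either both $m_i$ or at least one is bad. Taking expectations gives
\[ 6\varepsilon_i \ \ge\ \tfrac{3}{2}\cdot 4\,\varepsilon_{i-1}\cdot c_i,\]
which yields a recursion of the form $\varepsilon_i\ge \tfrac12\varepsilon_{i-1}+(\text{correction})$.

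\textbf{Base and iteration.} At level $i=d-k$ the count $m_{d-k}=j$ is odd, so no $(d-k)$-flat can split into two halves of equal count. Hence among the three complementary pairs of $(d-k)$-flats inside a random $(d-k+1)$-flat whose total is $2j$, a definite fraction must be bad. I would compute this carefully, since it is where the factor $\tfrac23$ should come from: of the three ways to pair the four quarters, an odd/parity obstruction rules out at least one out of every three. Iterating the recursion $\varepsilon_i\ge \tfrac12\varepsilon_{i-1}$ upward from level $d-k$ to level $d$ gives a geometric sum. Truncating the sum at the finite dimension $d-k$ should produce the factor $(1-2^{-(d-k)})$. The second-order terms, coming from configurations in which two or more $W_p$ are simultaneously bad, contribute the $O(2^{-2k})$ error, and the passage $n\to\infty$ gives the $o_d(1)$.

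\textbf{Main obstacle.} The main difficulty is getting the sharp loss factor $\tfrac12$ per level together with the exact constant $\tfrac23$, rather than a crude factor like $6$ from a union bound. I would first carry out the exhaustive case analysis of the integer vectors $(a_0,\dots,a_3)$, minimizing the number of bad pairs given the number of bad quarters. I would then set up a weighted double count, in the style of a linear program over configuration types, so that the per-level inequality is tight.
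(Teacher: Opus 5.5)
Your argument rests on a local inequality that is false. Write $a_p=m+\delta_p$ with $m=m_{i-1}$, so that $W_p\cup W_q$ is good exactly when $\delta_p+\delta_q=0$.

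\textbf{The local inequality fails.} Take the alternating pattern $(\delta_0,\delta_1,\delta_2,\delta_3)=(\delta,-\delta,\delta,-\delta)$ with $\delta\neq 0$. All four quarters are bad, yet only the two unions $W_0\cup W_2$ and $W_1\cup W_3$ are bad. So neither ``a bad $W_0$ forces three bad unions'' nor ``$t$ bad quarters force $\tfrac32 t$ bad unions'' holds. The pattern $(1,-1,-1,0)$ also fails it: $t=3$ with only $4<\tfrac92$ bad unions.

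\textbf{This is not an edge case.} Take the extremal construction $A=B^{-1}(S)$. Suppose the direction $U'$ of $R$ has $\rank(B|_{U'})=d-k$ and the split direction $V\subset U'$ has $\rank(B|_V)=d-k-1$. Then the quarter counts are $2^ka,\,2^ka,\,2^k(j-a),\,2^k(j-a)$, where $a$ is the number of points of $S$ in one coset of the hyperplane $B(V)\subset\F_2^{d-k}$. Up to relabeling this is exactly the alternating pattern, with $\delta=2^{k-1}(2a-j)\neq 0$ because $j$ is odd. For this construction and $k$ large, $\varepsilon_{d-1}\approx 2^{1-k}$ while $\varepsilon_d\approx 2^{-k}$. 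Averaging your claimed pointwise bound would give $6\varepsilon_i\ge \tfrac32\cdot 4\varepsilon_{i-1}$, i.e.\ $\varepsilon_d\ge\varepsilon_{d-1}$, which this example violates. The unexplained factor $c_i$ in your display does not come from any case analysis.

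\textbf{What the local analysis actually gives.} The correct pointwise statement is that the number of bad unions is at least $\tfrac12$ the number of bad quarters, with equality at the alternating pattern. This only gives $\varepsilon_i\ge\tfrac13\varepsilon_{i-1}$, so $1-\lambda^*(d,s)$ comes out of order $3^{-k}$, the wrong order of magnitude. No weighting of configuration types of a single split can do better, since nothing in that local picture prevents all bad quarters from sitting in alternating patterns.

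\textbf{The endgame does not produce $\tfrac23$.} The complementary-pair argument at level $d-k$ only shows that a $\tfrac13$ fraction of flats is bad; the parity lemma gives $\tfrac12$. So even a valid factor $\tfrac12$ per level would yield $1-\lambda^*\ge 2^{-(k+1)}$. That is only the weaker bound of \cref{prop:s-even}, not the constant $\tfrac23$.

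\textbf{The missing idea.} You need a way to control the alternating configurations, which all live in $(i+1)$-flats whose count is exactly $2m_i$. The paper works upward instead of downward. Fix a $(d+1)$-flat $F$ and let $S=F\cap A$.
\begin{itemize}
\item If $|S|\neq 2s$, each pair of complementary hyperplanes of $F$ contains at most one good one.
\item If $|S|=2s$, the bad hyperplane directions are exactly $\supp(\widehat{1_S})\setminus\{0\}$. The divisibility claim (if $\widehat{1_S}$ vanishes on $L\setminus\{0\}$ with $\dim L=t$, then $2^t\mid |S|$) shows this set meets every $(k+2)$-dimensional subspace. Bose--Burton then gives at least $2^{d-k}-1$ bad directions among all $2^{d+1}-1$ at once.
\end{itemize}
The fraction of $(d+1)$-flats with count $2s$ is at most $\lambda^*(d+1,2s)$. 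Setting $\lambda_t=\lambda^*(d+t,2^ts)$ and $c_t=\tfrac12-\frac{2^{d-k}-1}{2^{d+t+1}-1}$, this gives the recursion $\lambda_t\le\tfrac12+c_t\lambda_{t+1}$. Unrolling it, the constant $\tfrac23=\tfrac12\sum_{i\ge 0}4^{-i}$ comes from the resulting geometric series. The factor $(1-2^{-(d-k)})$ comes from the Bose--Burton count, not from any base case at level $d-k$.
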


On the other hand, a lower bound for $\lambda^*(d,s)$ can be obtained by the same linear algebraic construction presented in Alon--Axenovich--Goldwasser~\cite{alon2024hypercubestatistics}.

\begin{prop}\label{prop:flat-lower-bound}
    Let $d\ge 2$ and $s\le 2^d$ be nonnegative integers. For $s = 2^k\cdot j$ with $j$ odd and $k\ge 1$, we have
    \[\lambda^*(d,s)\ge 1-2^{-k}.\]
    For $s$ odd, we have $\lambda^*(d,s)\ge (1+o_d(1))\cdot 0.2887$.
\end{prop}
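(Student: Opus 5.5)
The plan is to use one construction for both parts: pull back a small set along a linear map. Fix $0\le m\le d$ and $n\ge d$, and let $L:\F_2^n\to\F_2^m$ be the coordinate projection onto the first $m$ coordinates, which is surjective. For $B\subseteq \F_2^m$, set $A=L^{-1}(B)$.

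The key observation is the following. Let $Q=x_0+U$ be a $d$-flat whose direction space satisfies $L(U)=\F_2^m$. Then $L|_Q:Q\to\F_2^m$ is surjective, since $L(Q)=L(x_0)+L(U)=\F_2^m$. It is also affine, and every fiber is a translate of $\ker(L)\cap U$, which has dimension $d-m$, so each fiber has exactly $2^{d-m}$ points. Hence
\[|Q\cap A|=\sum_{b\in B}|Q\cap L^{-1}(b)|=2^{d-m}|B|,\]
and this count does not depend on $x_0$. Therefore
\[\lambda^*(n,d,2^{d-m}|B|,A)\ \ge\ \P_U\bigl[L(U)=\F_2^m\bigr],\]
where $U$ is a uniformly random $d$-dimensional subspace of $\F_2^n$.

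For $s=2^k j$ with $j$ odd and $k\ge1$, choose $m=d-k$ and $|B|=j$. This is possible because $s\le 2^d$ gives $j\le 2^{d-k}=|\F_2^m|$. For odd $s$, choose $m=d$ and $|B|=s$; then the condition $L(U)=\F_2^d$ says exactly that $L|_U$ is injective.

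It remains to estimate $p_n:=\P[L(U)=\F_2^m]$ as $n\to\infty$. Sample $U$ as the span of $d$ independent uniform vectors $v_1,\dots,v_d\in\F_2^n$, conditioned on their being linearly independent. The conditioning event has probability $\prod_{i=0}^{d-1}(1-2^{i-n})\to1$, so it costs only $o_n(1)$ in total variation. Without the conditioning, the images $L(v_1),\dots,L(v_d)$ are i.i.d. uniform in $\F_2^m$. By the standard count of surjective $d\times m$ matrices over $\F_2$, these $d$ vectors span $\F_2^m$ with probability
\[\prod_{i=d-m+1}^{d}\bigl(1-2^{-i}\bigr).\]
Since $\lambda^*(n,d,s)\ge p_n$ for every $n$ and the limit $\lambda^*(d,s)$ exists, we get
\[\lambda^*(d,s)\ge\prod_{i=d-m+1}^{d}(1-2^{-i}).\]

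We now evaluate this product in the two cases. For $m=d-k$ the product is $\prod_{i=k+1}^{d}(1-2^{-i})$, and the union bound gives
\[\prod_{i=k+1}^{d}(1-2^{-i})\ \ge\ 1-\sum_{i>k}2^{-i}=1-2^{-k}.\]
For $m=d$ the product is $\prod_{i=1}^{d}(1-2^{-i})$, which decreases in $d$ to $\prod_{i\ge1}(1-2^{-i})\approx0.28879>0.2887$. This proves both bounds of \cref{prop:flat-lower-bound}; the odd case in fact holds for every $d$, not only asymptotically.

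The only step requiring care is the passage to the limit. One must check that a uniform $d$-subspace and the span of $d$ i.i.d. vectors agree up to $o_n(1)$ in total variation. This follows from the conditioning computation above, because the span of $d$ independent vectors, conditioned on linear independence, is exactly a uniform $d$-subspace.
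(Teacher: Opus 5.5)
Your proof is correct and is essentially the paper's argument: the paper also takes $A=B^{-1}(S)$ for a surjective linear $B:\F_2^n\to\F_2^{d-k}$ with $|S|=j$, and lower bounds $\lambda^*(n,d,s,A)$ by $\P[\rank(B|_U)=d-k]\to c(d,k)$. The differences are cosmetic. The paper computes this probability exactly by Gaussian binomial counting rather than by your random-vector and total-variation comparison. It also leaves implicit the final estimates $c(d,k)=\prod_{i=k+1}^{d}(1-2^{-i})\ge 1-2^{-k}$ and $c(d,0)\downarrow\prod_{i\ge1}(1-2^{-i})>0.2887$, which you spell out.
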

For completeness, we include a proof of \cref{prop:flat-lower-bound} in \cref{sec:prelim} which follows similarly to the proof of Theorem 2 in \cite{alon2024hypercubestatistics}. 

Combining \cref{thm:s-even} and \cref{prop:flat-lower-bound}, for $s = j\cdot 2^k$ with $j$ odd and $k\ge 1$, we have
\[\lambda^*(d,s) = 1-\Theta(2^{-k}),\]
which determines $1-\lambda^*(d,s)$ for all even $s$ up to an absolute constant. In particular, this shows that $\lambda^*(d,s)$ is controlled by the $2$-adic valuation of $s$ rather than the magnitude of $s$. This behavior is in sharp contrast to the current known upper bounds for the cube statistics from \cite{alon2024hypercubestatistics} which depend directly on the magnitude of $s$.

Now we present our result for odd $s$.

\begin{theorem}\label{thm:s-odd}
     Let $d\ge 1$ and $1<s<2^d$. Suppose $s$ is odd, then 
     \[\lambda^*(d,s)\le \frac{1}{2}.\]
\end{theorem}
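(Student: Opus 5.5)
The plan is to reduce to a fixed small configuration and use a parity identity. Since $\lambda^*(n,d,s)$ is non-increasing in $n$, it suffices to show $\lambda^*(n,d,s,A)\le \frac12$ for every $A\subseteq\F_2^n$ with $n$ large. The idea is to consider a $(d+1)$-flat $R$ together with the $d$-flats it contains. Any $d$-flat $Q\subseteq R$ has its complement $R\setminus Q$, also a $d$-flat, and
\[
|Q\cap A|+|(R\setminus Q)\cap A| = |R\cap A|.
\]
If both counts equal $s$, then $|R\cap A|=2s$. By itself this gives no contradiction, so a finer structure is needed.

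The key parity device is the following. Take three $d$-flats $Q_1,Q_2,Q_3$ of the form $x+U_i$, where $U_1,U_2,U_3$ are the three $d$-dimensional subspaces containing a common $(d-1)$-dimensional subspace $W$ inside a $(d+1)$-dimensional space $V=U_1+U_2$. Work inside the $(d+1)$-flat $R=x+V$, which splits into the four cosets $C_0,C_1,C_2,C_3$ of $W$. Each $Q_i$ is the union of $C_0$ and $C_i$. Write $a_j=|C_j\cap A|$. Then
\[
|Q_1\cap A|+|Q_2\cap A|+|Q_3\cap A| = 3a_0+a_1+a_2+a_3 = 2a_0+|R\cap A|.
\]
Also consider the "complementary" triple of $d$-flats inside $R$, namely $C_1\cup C_2$, $C_1\cup C_3$, $C_2\cup C_3$, whose counts sum to $2(a_1+a_2+a_3)$. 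In total, $R$ contains exactly these six $d$-flats through the partition $\{C_j\}$, and their counts sum to $3|R\cap A|$, which is even if $|R\cap A|$ is even.

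The counting plan runs as follows.

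\textbf{Step 1.} Choose a uniformly random pair $(R,\{C_j\})$, that is, a random $(d+1)$-flat together with a random $(d-1)$-dimensional direction $W$ inside it. Each of the six $d$-flats built from it is then a uniformly random $d$-flat. Hence the expected number of these six that hit $A$ in exactly $s$ points is $6\lambda^*(n,d,s,A)$.

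\textbf{Step 2.} Show deterministically that at most $3$ of the six can have count exactly $s$ when $s$ is odd. The six flats pair up into three complementary pairs, $(C_0\cup C_i,\ C_j\cup C_k)$ with $\{i,j,k\}=\{1,2,3\}$. Each pair sums to $|R\cap A|=:m$. If both members of some pair equal $s$, then $m=2s$. Now suppose four or more of the six equal $s$. Then two full pairs are hit, so $m=2s$, say $a_0+a_1=a_2+a_3=s$ and $a_0+a_2=a_1+a_3=s$. These force $a_1=a_2$ and $a_0=a_3$, so $a_0+a_1=s$ with $a_0+a_3=2a_0$. The third pair consists of $a_0+a_3=2a_0$ and $a_1+a_2=2a_1$, both even, so neither equals the odd number $s$. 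That caps the count at $4$, not $3$, so the argument must be sharpened.

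\textbf{Refinement.} Consider instead all $d$-flats in $R$ containing a fixed $(d-1)$-flat, or use a larger configuration, for instance a $(d+2)$-flat partitioned into the eight cosets of $W$. Counts of $d$-flats there are sums $a_i+a_j$ over pairs forming a perfect-matching structure of $\F_2^3$. I would look for a configuration where the odd parity of $s$ forces at most half of the $d$-flats to attain $s$. This is likely done via a graph argument: the flats with count $s$ give edges $ij$ with $a_i+a_j=s$ odd, so $a_i$ and $a_j$ have different parity. Such edges form a bipartite graph, between odd-valued and even-valued cosets, inside the graph of admissible pairs. Bounding the edge density of a bipartite subgraph by $1/2$ of the available configuration edges, with the configuration chosen as a suitable dense non-bipartite family (for example $K_4$-type pairings where the max-cut fraction is computed), yields the bound $\frac12$ after averaging. \textbf{The main obstacle is choosing the configuration whose max-cut ratio is exactly $\frac12$.} I would first try the $2^{r}$ cosets of $W$ in a $(d-1+r)$-flat with $r\to\infty$: the admissible pairs form the complete graph on $2^r$ vertices, whose max-cut fraction tends to $\frac12$, giving $\lambda^*\le\frac12+o(1)$, and then remove the error via monotonicity in $n$.
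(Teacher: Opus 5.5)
Your final ``refinement'' is correct and is essentially the paper's proof. With $r=n-d+1$ (so the ambient flat is all of $\F_2^n$), the $d$-flats whose direction contains $W$ are exactly the unions of two distinct cosets of $W$. A flat meeting $A$ in an odd number of points must join an odd-parity coset to an even-parity one, and at most $m(N-m)\le N^2/4$ of the $\binom{N}{2}$ pairs do so. This gives $\lambda^*(n,d,s,A)\le \frac12+\frac{1}{2(2^{n-d+1}-1)}$, hence $\lambda^*(d,s)\le\frac12$ as $n\to\infty$.

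Your six-flat configuration in Steps 1--2 is just the case $r=2$, where this cut bound gives $2/3$, so those steps can be discarded. The one point you should state explicitly is that choosing $W$ uniformly and then a uniform pair of distinct cosets produces a uniformly random $d$-flat. This follows from transitivity of the affine group, and the paper uses it in the same way.
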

 We note that \cref{thm:s-odd} does not hold for $\lambda(d,s)$ as there are specific values for $d$ and odd $s$ where $\lambda(d,s) > 1/2$ (see \cref{sec:concluding-remarks}). In fact, to prove \cref{thm:s-odd}, we will prove a strictly stronger statement that for any subset $A\subseteq \F_2^n$, the fraction of affine $d$-flats that intersect $A$ at an odd number of points is at most $\frac{1}{2} + o(1)$ as $n\to\infty$. This is not true in the axis-aligned cube setting, as one can construct a set that intersects every $d$-cube in an odd number of points (see \cref{rmk:cube-odd}).

Furthermore, we also determine the exact value of $\lambda^*(d,2^{d-1})$.
\begin{theorem}\label{thm:s-2d-1}
    Let $d > 1$ be an integer. We have
    \[\lambda^*(d,2^{d-1}) = 1-2^{-d}.\]
\end{theorem}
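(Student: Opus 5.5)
Two things are needed: a construction giving $\lambda^*(d,2^{d-1})\ge 1-2^{-d}$, and a matching upper bound.

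\textbf{Lower bound.} I would take $A$ to be the complement of a hyperplane, $A=\{x: x_1=1\}$. A $d$-flat $Q=x_0+U$ either satisfies $e_1^*|_U\neq 0$, in which case it meets $A$ in exactly $2^{d-1}$ points, or $U\subseteq\ker e_1^*$, in which case it meets $A$ in $0$ or $2^d$ points. The fraction of $d$-dimensional subspaces $U$ contained in a fixed hyperplane is
\[
\frac{\binom{n-1}{d}_2}{\binom{n}{d}_2}=\frac{2^{n-d}-1}{2^n-1}\to 2^{-d}.
\]
Hence $\lambda^*(n,d,2^{d-1},A)\to 1-2^{-d}$, and $\lambda^*(d,2^{d-1})\ge 1-2^{-d}$.

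\textbf{Upper bound: reduction to small flats.} I would work with $f=1_A$ and use the fact that $|Q\cap A|=2^{d-1}$ is a statement about a Fourier/parity-type quantity. The natural plan is to pass to $(d+1)$-flats. By monotonicity and averaging, $\lambda^*(n,d,s)$ is at most the maximum, over subsets $B$ of a $(d+1)$-flat $\F_2^{d+1}$, of the fraction of $d$-subflats of $\F_2^{d+1}$ meeting $B$ in exactly $s$ points. So it suffices to show that for every $B\subseteq\F_2^{d+1}$, at most a $1-2^{-d}$ fraction of the $d$-flats in $\F_2^{d+1}$ meet $B$ in exactly $2^{d-1}$ points.

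If this finite bound fails, I would instead average over $m$-flats for larger $m$ and take $m\to\infty$. The hyperplane example suggests the extremal count exactly: there are $2(2^{d+1}-1)$ affine hyperplanes in $\F_2^{d+1}$, and the target is that at least $2(2^{d+1}-1)\cdot 2^{-d}\approx 4$ of them are bad.

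\textbf{Key structural lemma.} Affine hyperplanes $H$ of $\F_2^{d+1}$ come in complementary pairs $\{H,H^c\}$, indexed by nonzero $\alpha$. If $|B|=b$, then $|B\cap H|+|B\cap H^c|=b$. Writing
\[
\hat f(\alpha)=\sum_x (-1)^{\alpha\cdot x}f(x),
\]
we get $|B\cap H|=\tfrac{1}{2}(b\pm\hat f(\alpha))$. For both hyperplanes of the pair to be good, we need $b=2^d$ and $\hat f(\alpha)=0$.

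\emph{Case $b\ne 2^d$.} Then every pair has at most one good member, so the fraction of good hyperplanes is at most $1/2\le 1-2^{-d}$ when $d\ge1$.

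\emph{Case $b=2^d$.} A pair is entirely good iff $\hat f(\alpha)=0$, and otherwise both members are bad. Parseval gives
\[
\sum_{\alpha}\hat f(\alpha)^2=2^{d+1}b=2^{2d+1},
\]
while $\hat f(0)^2=2^{2d}$. So $\sum_{\alpha\neq0}\hat f(\alpha)^2=2^{2d}$. Since $\hat f(\alpha)\equiv b\equiv 0\pmod 2$, each nonzero value is at least $2$ in absolute value. This alone only shows that some $\alpha\ne 0$ has $\hat f(\alpha)\neq0$, which gives at least one bad pair, i.e. $2$ bad hyperplanes.

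That count is too weak. To beat it, a single $(d+1)$-flat is not enough, so the plan is to average over $m$-flats with $m=d+r$ and exploit higher divisibility. For $B\subseteq\F_2^m$ and a $d$-flat $Q=\{x: Mx=c\}$ of codimension $r$,
\[
|B\cap Q|=2^{-r}\sum_{\beta\in\mathrm{rowspace}(M)}(-1)^{\beta\cdot c}\hat f(\beta).
\]
Requiring this to equal $2^{d-1}$ for all $2^r$ cosets $c$ forces $b=2^{d+r-1}$ and $\hat f(\beta)=0$ for all nonzero $\beta$ in the row space. So good codim-$r$ directions correspond to $r$-dimensional subspaces $W\le\F_2^{m}$ avoiding the support $S=\{\beta\ne0:\hat f(\beta)\ne0\}$, with partially good directions also possible. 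I would show the fraction of bad flats is at least $2^{-d}+o(1)$ by combining a Parseval lower bound on $|S|$ with the mass-distribution fact that a random $r$-space hits a set of size $\ge 2^{m-d}$ with probability about $1-e^{-\cdots}$.

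\textbf{Main obstacle.} The hardest step is exactly this lower bound on the bad fraction, including control of the partially good cosets. I would first try a second-moment identity: compute $\E_Q\,(|B\cap Q|-2^{d-1})^2$ exactly as a Fourier sum of variance-type terms. Then bound it above by $(\text{bad fraction})\cdot\max(\text{deviation})^2$ and below using evenness/divisibility of the deviations. If this is not tight, I would fall back on showing that the best $B$ is a hyperplane complement, via a stability argument on the Fourier support.
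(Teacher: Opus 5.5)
Your lower bound is correct and is the paper's construction. The upper bound, however, is not proved: you give a menu of strategies, and the one you develop does not go through as stated. The $(d+1)$-flat reduction only gives $1-1/(2^{d+1}-1)\approx 1-2^{-(d+1)}$, as you note. The codimension-$r$ plan inside $\F_2^{m}$, $m=d+r$, breaks in both cases of your split:
\begin{itemize}
\item If $|B|\ne 2^{m-1}$, summing $|B\cap Q|$ over the $2^r$ parallel cosets only forces one bad coset per direction. That gives $1-2^{-r}$, not $1/2$.
\item If $|B|=2^{m-1}$, a direction $W$ meeting $S$ can still contain good cosets (e.g.\ $\hat f(\beta_1)=\hat f(\beta_2)\neq 0=\hat f(\beta_1+\beta_2)$ makes half the cosets good). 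So counting $r$-spaces that avoid $S$ does not bound the good fraction.
\item No Parseval argument can give $|S|\ge 2^{m-d}$, since for $B$ a hyperplane $S$ is a single vector.
\end{itemize}
So the ``hardest step'' you flag is precisely the missing proof.

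The missing ingredient is the second-moment computation you mention only as a ``first try''. It proves the bound directly in $\F_2^n$, with no reduction to small flats, no divisibility, and no stability argument.
\begin{itemize}
\item Set $h=1-2\cdot 1_A$ and $H_U=h*1_U$. Then $H_U(x)=2^d-2|A\cap(x+U)|$ vanishes exactly on good flats and satisfies $|H_U|\le 2^d$. Hence $\P_x[H_U(x)=0]\le 1-2^{-2d}\E_x[H_U(x)^2]$.
\item Since $\widehat{H_U}=2^d\,\hat h\,1_{U^\perp}$, Parseval gives $\E_x[H_U(x)^2]=2^{2d-2n}\sum_{\xi\in U^\perp}\hat h(\xi)^2$.
\item Average over $U$. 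Every $\xi$ lies in $U^\perp$ with probability at least $\binom{n-1}{d}_2/\binom{n}{d}_2$ (equality for $\xi\neq 0$, probability $1$ for $\xi=0$). Also $\sum_\xi \hat h(\xi)^2=2^{2n}$ because $h^2\equiv 1$.
\end{itemize}
Together these give $\lambda^*(n,d,2^{d-1},A)\le 1-\binom{n-1}{d}_2/\binom{n}{d}_2\to 1-2^{-d}$.

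The lower bound on the second moment comes from the uniform bound on $\P_U[\xi\in U^\perp]$ and the fixed Parseval mass of a $\pm1$ function, not from evenness of the deviations. The $\pm1$ normalization (equivalently, keeping the $\hat f(0)-2^{n-1}$ term when working with $f$) removes all dependence on $|A|$, so no case split on $|A|$ is needed either.
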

Note that in contrast we have $\lambda(d,2^{d-1}) = 1$, which is achieved by taking the set of points with Hamming weight of the same parity.

Finally, we observe that $\lambda^*(d,s)$ cannot change too drastically between neighboring values. Namely, we show that $\lambda^*(d,s)$ and $\lambda^*(d,s+1)$ differ by a multiplicative factor of at most $e$.
\begin{theorem}\label{thm:one-point}
    For $d>1$ and $2\le s \le 2^{d}$, we have
    \[\frac{\lambda^*(d,s)}{\lambda^*(d,s-1)}\le \lpr{\frac{s}{s-1}}^{s-1}\le e,\]
    and for $0\le s \le 2^d-2$, we have
    \[\frac{\lambda^*(d,s)}{\lambda^*(d,s+1)}\le \lpr{\frac{2^d-s}{2^d-s-1}}^{2^d-s-1}\le e.\]
\end{theorem}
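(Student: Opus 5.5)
Let $A\subseteq\F_2^n$ attain (nearly) $\lambda^*(n,d,s)$. Form a new set $A'$ by deleting each point of $A$ independently with probability $p$. Then compare, flat by flat, how often $A'$ hits $s-1$ points versus how often $A$ hits $s$ points. The second inequality will follow by the symmetric argument of adding each point of the complement independently, or equivalently by complementation. Throughout, we use that $\lambda^*(n,d,s)$ decreases to its limit, so it suffices to prove the inequality $\lambda^*(n,d,s,A)\cdot c\le \lambda^*(n,d,s-1)$ for every $n$ and every $A$, and then pass to $n\to\infty$.

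\emph{Step 1 (first inequality).} Fix a $d$-flat $Q$ with $|Q\cap A|=s$. The probability that exactly one of these $s$ points is deleted is $s p(1-p)^{s-1}$, and in that event $|Q\cap A'|=s-1$. Choose $p=1/s$, which maximizes this quantity, to get
\[
s\cdot\tfrac1s\lpr{1-\tfrac1s}^{s-1}=\lpr{\tfrac{s-1}{s}}^{s-1}.
\]
By linearity of expectation,
\[
\E[\lambda^*(n,d,s-1,A')]\ \ge\ \lpr{\tfrac{s-1}{s}}^{s-1}\lambda^*(n,d,s,A),
\]
since flats with $|Q\cap A|\neq s$ only contribute nonnegatively. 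Hence some realization $A'$ satisfies $\lambda^*(n,d,s-1)\ge \lpr{\frac{s-1}{s}}^{s-1}\lambda^*(n,d,s)$. Letting $n\to\infty$ gives the first bound. The estimate $\lpr{\frac{s}{s-1}}^{s-1}=\lpr{1+\frac1{s-1}}^{s-1}\le e$ is standard.

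\emph{Step 2 (second inequality).} Apply the same argument to the complement: $|Q\cap A|=s$ if and only if $|Q\cap(\F_2^n\setminus A)|=2^d-s$, so $\lambda^*(n,d,s)=\lambda^*(n,d,2^d-s)$. Applying Step 1 with $2^d-s$ in place of $s$ requires $2\le 2^d-s\le 2^d$, which is exactly the range $0\le s\le 2^d-2$. It gives
\[
\frac{\lambda^*(d,2^d-s)}{\lambda^*(d,2^d-s-1)}\le \lpr{\frac{2^d-s}{2^d-s-1}}^{2^d-s-1},
\]
and translating back yields the claim. Equivalently, one can add each non-$A$ point independently with probability $1/(2^d-s)$.

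There is no serious obstacle. The only points needing care are the boundary cases and the limit. The case $s=2^d$ is covered, since the hypothesis $s\le 2^d$ includes it and the formula still makes sense. The case $s-1=0$ cannot occur because $s\ge2$. For the limit, take a maximizing $A$ for each $n$, note that the random construction stays inside $\F_2^n$, and then let $n\to\infty$, using that both sides converge.
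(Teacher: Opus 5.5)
Your proposal is correct and is essentially the paper's proof: you delete each point of $A$ independently with probability $1/s$, lower-bound the chance that a flat with $|Q\cap A|=s$ ends up with exactly $s-1$ points, fix a good realization, and let $n\to\infty$. For the second inequality the paper adds each point of $\F_2^n\setminus A$ with probability $1/(2^d-s)$, which is exactly what your complementation reduction ($\lambda^*(n,d,s)=\lambda^*(n,d,2^d-s)$) amounts to.
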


As a direct corollary, we also obtain better lower bounds for certain odd values of $s$.
\begin{corollary}\label{cor:one-point-lb}
    For $d>1$ and $1<s<2^d$ where $s = j\cdot 2^k$ with $j$ odd and $k\ge 1$, we have
    \[\lambda^*(d,s + 1)\ge \frac{1}{e}\cdot (1-2^{-k})\quad \text{and}\quad \lambda^*(d,s - 1)\ge \frac{1}{e}\cdot (1-2^{-k}).\]
\end{corollary}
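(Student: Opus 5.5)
The corollary follows directly by combining \cref{thm:one-point} with the even-case lower bound of \cref{prop:flat-lower-bound}, applied to the even value $s$ and to its two neighbours $s\pm1$.

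First, since $s=j\cdot 2^k$ with $k\ge 1$ and $1<s<2^d$, we have $2\le s\le 2^d$. Also $d\ge 2$, because $d>1$, and $s\le 2^d$. So \cref{prop:flat-lower-bound} applies and gives
\[\lambda^*(d,s)\ge 1-2^{-k}.\]

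For the bound on $\lambda^*(d,s+1)$, apply the first inequality of \cref{thm:one-point} with $s$ replaced by $s+1$. This is legitimate because $2\le s+1\le 2^d$: indeed $s<2^d$, so $s+1\le 2^d$. It yields $\lambda^*(d,s+1)/\lambda^*(d,s)\le e$ in the wrong direction for what we want, so we need the inequality that bounds $\lambda^*(d,s)$ above by a multiple of $\lambda^*(d,s+1)$. That is the second inequality of \cref{thm:one-point}, applied at $s$. It requires $0\le s\le 2^d-2$, which holds since $s$ is even and $s<2^d$, so $s\le 2^d-2$. It gives
\[\lambda^*(d,s+1)\ge \lambda^*(d,s)/e\ge \tfrac1e(1-2^{-k}).\]

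For the bound on $\lambda^*(d,s-1)$, use the first inequality of \cref{thm:one-point} at $s$. This requires $2\le s\le 2^d$, which was checked above, and it gives
\[\lambda^*(d,s-1)\ge \lambda^*(d,s)/e\ge \tfrac1e(1-2^{-k}).\]

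There is no real obstacle here. The only points needing care are choosing which of the two inequalities in \cref{thm:one-point} bounds each neighbour from below, and checking their index ranges, which uses the evenness of $s$ to get $s\le 2^d-2$.
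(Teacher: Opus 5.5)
Your proposal is correct and is the paper's intended argument: \cref{prop:flat-lower-bound} gives $\lambda^*(d,s)\ge 1-2^{-k}$, and \cref{thm:one-point} at $s$ carries this to $s\pm 1$ with a loss of at most a factor $e$. You use the second inequality for $s+1$, which is valid because $s$ even forces $s\le 2^d-2$, and the first inequality for $s-1$; your aside about applying the first inequality at $s+1$ is a dead end and can simply be deleted.
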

In particular, for $k\ge 3$ and $d$ large, we have $(1-2^{-k})/e > 0.2887$, improving on \cref{prop:flat-lower-bound} for specific values of $s$. 

\subsection{Comparison to $\lambda(d,s)$}

In this subsection, we compare our results for $\lambda^*(d,s)$ with current known results for $\lambda(d,s)$. We define $c_d$ as the probability that a $d\times d$ matrix over $\F_2$ where each column is an independent random nonzero vector in $\F_2^d$ is nonsingular. Note that we have
\[c_d = \prod_{i = 1}^{d-1}\lpr{1-\frac{2^i-1}{2^d-1}},\]
and $c_d\to 0.2887$ as $d\to \infty$. We also define $c(d,k)$ to be the probability that a $(d-k)\times d$ matrix where each column is an independent random vector in $\F_2^{d-k}$ having rank $d-k$. Note that we have
\[c(d,k)=\prod_{i = 0}^{d-k-1}\lpr{1-\frac{2^i}{2^d}}\]
and for fixed $k$, we have $c(d,k)\to 1 - O(2^{-k})$ as $d\to \infty$.

Now we can recall the current known bounds for $\lambda(d,s)$ in the following theorem.

\begin{theorem}[\cite{alon2024hypercubestatistics}]\label{thm:AAG}
Let $d\ge 2$ and $s\le 2^d$ be nonnegative integers. 
\begin{enumerate}
    \item $\lambda(d,s) = 1$ if and only if $s\in \{0,2^{d-1}, 2^d\}$. \label{item:s=1}
    \item For $1 < s < 2^{d-1}$, we have
    \[\lambda(d,s)\le \lambda(d+2,d,s)\le \lpr{1-\frac{1}{4s-1}}\lpr{1+\frac{1}{d+1}}.\]
    \item For $s = 1$, we have $\lambda(d,1)\ge (1-2^{-d})^{2^d-1}.$ In particular, $
    \lambda(d,1)\to (1+o(1))\frac{1}{e}$ as $d\to \infty$.
    \item For any $1\le s\le 2^d$, we have 
    \[\lambda(d,s)\ge c_d.\]
    \item For $s = 2^k\cdot j$, we have 
    \[\lambda(d,s)\ge c(d,k).\]
\end{enumerate}
\end{theorem}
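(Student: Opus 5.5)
Since \cref{thm:AAG} is quoted from \cite{alon2024hypercubestatistics}, I would reprove it item by item. The lower bounds (3)--(5) come from random or linear-algebraic constructions plus an expectation argument. The upper bounds (1)--(2) come from local counting inside a cube of dimension $d+1$ or $d+2$. Throughout I use that $\lambda(n,d,s)$ is non-increasing in $n$. Hence $\lambda(d,s)\ge \E_A[\lambda(n,d,s,A)]$ for a random $A$, for each fixed $n$ and then in the limit, and $\lambda(d,s)\le \lambda(d+2,d,s)$.

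\emph{Constructions (3)--(5).} For (3), take $A\subseteq Q_n$ with each point included independently with probability $2^{-d}$. For a fixed $d$-subcube, $|Q_d\cap A|\sim \mathrm{Bin}(2^d,2^{-d})$, so
\[\P(|Q_d\cap A|=1)=2^d\cdot 2^{-d}(1-2^{-d})^{2^d-1}=(1-2^{-d})^{2^d-1}.\]
By linearity the expected fraction of good subcubes equals this, so some $A$ attains at least this value.

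For (4), choose a $d\times n$ matrix $M$ over $\F_2$ whose columns are independent uniform nonzero vectors. Fix $S\subseteq\F_2^d$ with $|S|=s$ and set $A=\{x: Mx\in S\}$. A $d$-subcube with free coordinate set $I$ is mapped by $x\mapsto Mx$ affinely, with linear part given by the columns $M_I$. If $M_I$ is nonsingular this map is a bijection onto $\F_2^d$, so the cube meets $A$ in exactly $s$ points. This happens with probability $c_d$, so the expected fraction is at least $c_d$.

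For (5), with $s=2^k j$, use instead a $(d-k)\times n$ matrix with independent uniform columns and $|S|=j$. Whenever $M_I$ has rank $d-k$, every point of $\F_2^{d-k}$ has exactly $2^k$ preimages in the cube, so the intersection has size $j2^k=s$. This occurs with probability $c(d,k)$.

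\emph{Item (1).} The values $s=0$ and $s=2^d$ are achieved by $A=\emptyset$ and $A=Q_n$. For $s=2^{d-1}$, take the even-weight vectors: every subcube of dimension at least $1$ is half even. For the converse, by complementation ($s\leftrightarrow 2^d-s$) it suffices to treat $1\le s<2^{d-1}$. When $1<s<2^{d-1}$, item (2) gives $\lambda(d,s)<1$ for $d$ large relative to the bound. For small cases, and for $s=1$, I would argue directly.

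For $s=1$, suppose every $d$-subcube meets $A$ in exactly one point. Then every $(d+1)$-subcube contains exactly $2$ points of $A$, since it splits into two $d$-subcubes. Splitting along each of its $d+1$ directions must separate these two points, so they are antipodal in that $(d+1)$-cube. Now look inside a $(d+2)$-cube. Its two antipodal halves (along some direction $i$) each contain an antipodal pair. Applying the same statement to $(d+1)$-subcubes along another direction forces a contradiction when $d+2\ge 4$: the points must agree and disagree in some coordinate simultaneously. The general case $1<s<2^{d-1}$ with $\lambda=1$ should follow the same way, by restricting to $Q_{d+2}$ and using item (2).

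\emph{Item (2).} This is the main obstacle. The plan is to work inside $Q_{d+2}$ and double count. Each $(d+1)$-subcube decomposes in $d+1$ ways into two parallel $d$-subcubes, and each $d$-subcube lies in $2(d+1)$ of the $(d+1)$-subcubes.

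If most $d$-subcubes have intersection exactly $s$, then most $(d+1)$-subcubes have total intersection $2s$ with all splittings balanced. In $Q_{d+2}$, compare the four $d$-subcubes obtained by fixing two coordinates $i,j$. Their counts $a_{00},a_{01},a_{10},a_{11}$ satisfy consistency relations with the counts of the $(d+1)$-subcubes. Summing over all pairs $\{i,j\}$ then shows the total $|A\cap Q_{d+2}|=4s$ cannot be distributed so that all $4\binom{d+2}{2}$ subcubes have value $s$. Quantitatively, I expect one bad subcube among roughly every $4s-1$, which gives the factor $1-\frac{1}{4s-1}$. The factor $1+\frac{1}{d+1}$ accounts for the boundary terms in the averaging.

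The delicate part is finding the correct weighting that converts "$s$ is not $0$ or $2^{d-1}$" into a forced fraction of bad subcubes. I would first try a linear programming or Fourier-type identity over the $d+2$ directions of $Q_{d+2}$ that encodes these count relations.
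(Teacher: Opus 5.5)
\textbf{Overall.} Your constructions for (3)--(5) and the ``if'' half of (1) are correct and standard. For (5) it is the same preimage construction $A=B^{-1}(S)$ that the paper uses in \cref{prop:flat-lower-bound}; the paper itself only quotes \cref{thm:AAG} and gives no proof. Your antipodal-pair argument for $s=1$ also works. Two small corrections. First, $\lambda(d,s)=\inf_n\lambda(n,d,s)$, so a random construction bounds $\lambda(d,s)$ from below only because your bounds hold for every $n$ (they do). Second, (3) gives only $\lambda(d,1)\ge(1+o(1))/e$. The matching upper bound is the open Poisson conjecture, so the ``$\to$'' in the statement must be read as a lower bound. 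The genuine gaps are in (1) and (2).

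\textbf{Item (1), ``only if''.} The bound in (2) is below $1$ only when $4s<d+3$, so for almost all $1<s<2^{d-1}$ it says nothing. Restricting to $Q_{d+2}$ cannot work either. For $d=3$, $s=2$, the set $A=\{(a,b,c,a+b,a+c):a,b,c\in\F_2\}\subseteq Q_5$ meets every $3$-subcube of $Q_5$ in exactly $2$ points, because any two of the five coordinate forms are linearly independent. Hence $\lambda(5,3,2)=1$, and you have to let $n$ grow. One way to do this: $A\subseteq Q_n$ meets every $d$-subcube in exactly $s$ points iff its projection onto every set of $n-d$ coordinates is uniform. Then the characters $(-1)^{\sum_{i\in S}x_i}$ with $|S|\le (n-d)/2$ are pairwise orthogonal on $A$: the product of two of them is a nontrivial character depending on at most $n-d$ coordinates, which sums to $0$ over $A$. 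So if $A\neq\varnothing$ they are linearly independent, giving $s2^{n-d}=|A|\ge\sum_{i\le (n-d)/2}\binom{n}{i}=(\tfrac12-o_n(1))2^n$. For large $n$ this forces $s\ge 2^{d-1}$. The same bound for the complement forces $2^d-s\ge 2^{d-1}$, so $s=2^{d-1}$ unless $A$ or its complement is empty.

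\textbf{Item (2).} What you give is a plan rather than a proof. Its guiding claim, that in $Q_{d+2}$ the total $4s$ cannot be spread so that every $d$-subcube gets $s$, is false in general; the example above is a counterexample. Nor is $1+\frac1{d+1}$ a boundary term.

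The missing idea is a Tur\'an argument. Let $c_{ab}=|A\cap\{x_i=a,x_j=b\}|$.
\begin{itemize}
\item If $|A|\ne 4s$, each pair $\{i,j\}$ has at most $3$ good subcubes.
\item If $|A|=4s$ and $i$ or $j$ is unbalanced (that is, $|A\cap\{x_i=0\}|\ne 2s$), at most $2$ are good.
\item If $|A|=4s$ and both are balanced, then $c_{00}=c_{11}$ and $c_{01}=c_{10}=2s-c_{00}$, so all four are good or none.
\end{itemize}
Consider the graph on the balanced coordinates whose edges are the ``all four good'' pairs. It is $K_{4s}$-free. For a clique $i_1,\dots,i_m$, set $w_t=((-1)^{x_{i_t}})_{x\in A}$. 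Balance gives $\langle w_t,\mathbf 1\rangle=0$, and $\langle w_t,w_{t'}\rangle=c_{00}+c_{11}-c_{01}-c_{10}=0$. So $\mathbf 1,w_1,\dots,w_m$ are pairwise orthogonal in $\R^{4s}$, and $m+1\le 4s$.

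Tur\'an's theorem then gives at most $(1-\frac1{4s-1})\frac{(d+2)^2}{2}$ such pairs. Four times this, divided by the $4\binom{d+2}{2}$ subcubes, is exactly $(1-\frac1{4s-1})(1+\frac1{d+1})$. The other cases are dominated because $1-\frac1{4s-1}\ge\frac67>\frac34$ for $s\ge 2$, which is where the hypothesis $s>1$ is used.
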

We remark that \cref{item:s=1} in \cref{thm:AAG} was first proved by Goldwasser and Hansen in \cite{GoldwasserHansen2024Inducibility}. On the lower bound side, we remark that $c_d > c(d,0)$ as $c_d$ is defined to be the nonsingular probability of a $d\times d$ matrix with \emph{nonzero} independent columns. On the other hand, the bounds in \cref{thm:AAG} imply that if $s$ is divisible by a power of $2$ that is $\Theta(s)$, then $\lambda(d,s) = 1- \Theta(1/s)$. Most recently, Bodn{\'a}r and Pikhurko determined in \cite{bodnar_pikhurko_2025_hypercube} three exact values for $\lambda(d,s)$ where $\lambda(d,s)\ne 1$ using the Flag algebra method. Specifically, they proved that $\lambda(3,2) = 8/9$, $\lambda(4,2) = 264/343$, and $\lambda(4,4) = 26/27$. However, obtaining tight bounds for essentially all other values of $d$ and $s$ remains open. 

\subsection{Notations.} For $x,y\in \F_2^n$ where $x = (x_1,\dots, x_n)$ and $y = (y_1,\dots, y_n)$, we define the bilinear form $\ang{x,y} := \sum_{i = 1}^n x_iy_i\pmod{2}$. For a function $f$, we use $f\not\equiv 0$ to denote that $f$ is not identically zero.

\subsection{Paper organization} In \cref{sec:prelim}, we recall definitions and theorems that will be later used in our proofs and include the proof of \cref{prop:flat-lower-bound} for completeness. In \cref{sec:s-even}, we present the proofs of \cref{thm:s-even} and \cref{thm:s-2d-1}. In \cref{sec:s-odd}, we give the proofs of \cref{thm:s-odd} and \cref{thm:one-point}. Finally, in \cref{sec:concluding-remarks}, we include some concluding remarks and open problems.

\medskip\noindent\textbf{Acknowledgments.} The author would like to thank Lisa Sauermann for bringing the work of Alon--Axenovich--Goldwasser~\cite{alon2024hypercubestatistics} to her attention, providing helpful discussions and carefully reading an earlier draft of this note. The author would like to thank Ting-Wei Chao and Dmitrii Zakharov for various useful suggestions and inspiring discussions.

\section{Preliminaries}\label{sec:prelim}

Let $\Gr(n,d)$ denote the set of $d$-dimensional linear subspaces in $\F_2^n$ and let $\Aff(n,d)$ denote the set of $d$-flats in $\F_2^n$. Note that every $Q\in \Aff(n,d)$ can be written as $Q = x_0+U$ for some $x_0\in \F_2^n$ and $U\in \Gr(n,d)$. For a linear subspace $U\subseteq \F_2^n$, we use $U^\perp:=\{x\in \F_2^n\mid \langle x,u\rangle=0\text{ for all }u\in U\}$ to denote its orthogonal complement. Recall the $\F_2$-analogue of $\binom{n}{d}$ is defined as 
\[\binom{n}{d}_2:= \prod_{i=0}^{d-1} \frac{2^{\,n-i}-1}{2^{\,d-i}-1},\]
which is the number of $d$-dimensional linear subspaces in $\F_2^n$. Namely, we have $|\Gr(n,d)| = \binom{n}{d}_2$. Furthermore, we have
\[|\Aff(n,d)| = 2^{n-d}\binom{n}{d}_2.\]
Note that for a subset $A\subseteq \F_2^n$, we can interpret $\lambda^*(n,d,s,A)$ as the probability of a uniformly random $d$-flat $Q\sim \Aff(n,d)$ intersecting $A$ at exactly $s$ points.

Recall that in the introduction, we defined the quantity $c(d,k)$. This is the probability of a random $(d-k)\times d$ matrix with columns being independent random vectors in $\F_2^{d-k}$ having rank $d-k$. Specifically, we have
\[c(d,k)=\prod_{i = 0}^{d-k-1}\lpr{1-\frac{2^i}{2^d}} = \prod_{i = 0}^{d-k-1}(1-2^{i-d}).\]
For our setting, it is more natural to consider the quantity $c_n(d,k)$ defined as follows. Let $B:\F_2^n\to \F_2^{d-k}$ be a surjective linear map. Define
\[c_n(d,k) = \P_{U\in \Gr(n,d)}[\rank(B|_U) = d-k].\]
Then we have 
\[c_n(d,k) = 2^{(d-k)(n-d)}\cdot \frac{\binom{n-d+k}{k}_2}{\binom{n}{d}_2}.\]
Indeed, for $U\in \Gr(n,d)$ to satisfy $\rank(B|_U) = d-k$, we have $\dim (U\cap \ker B) = k$. So we can first choose the $k$-dimensional subspace $W = U\cap \ker B$ from $\ker B$ in $\binom{n-d+k}{k}_2$ many ways. Then for each fixed $W$, the possible $U\in \Gr(n,d)$ with $U\cap \ker B = W$ are the lifts of $\F_2^n/\ker B$ to a $d$-dimensional subspace containing $W$. There are $2^{(d-k)(n-d)}$ such lifts because they are parametrized by the linear maps $\F_2^n/\ker B\to \ker B/W$ where $\dim \F_2^n/\ker B = d-k$ and $\dim \ker B/W = n-d$.
Expanding the $2$-binomial coefficients, we have
\begin{align*}
    \lim_{n\to\infty}c_n(d,k) &= \lim_{n\to\infty} \lpr{\prod_{i = 0}^{d-k-1}(1-2^{i-n})}\lpr{\prod_{i = 0}^{k-1}\frac{1-2^{i-(n-d+k)}}{1-2^{i-n}}}\lpr{\prod_{i = 0}^{d-k-1}(1-2^{i-d})}\\
    &= \lpr{\prod_{i = 0}^{d-k-1}\lim_{n\to\infty}(1-2^{i-n})}\lpr{\prod_{i = 0}^{k-1}\lim_{n\to\infty}\frac{1-2^{i-(n-d+k)}}{1-2^{i-n}}}\prod_{i = 0}^{d-k-1}(1-2^{i-d})\\
    &= \prod_{i = 0}^{d-k-1}(1-2^{i-d}) = c(d,k)
\end{align*}
since the factors in the first two finite products both tend to $1$ as $n\to\infty$.

We will also use the discrete Fourier transform over $\F_2^n$. For a function $f:\F_2^n\to \R$, it is more convenient for us to use the \emph{unnormalized} Fourier transform $\hat{f}: \F_2^n\to \R$ defined as
\[\hat{f}(\xi) = \sum_{x\in \F_2^n}f(x)(-1)^{\ang{\xi, x}}.\]
Thus we have the inversion formula
\[f(x) = 2^{-n}\sum_{\xi\in \F_2^n}\hat{f}(\xi)(-1)^{\ang{\xi,x}}.\]
Recall Parseval's identity as 
\[\sum_{\xi\in \F_2^n}\hat{f}(\xi)^2 = 2^n\sum_{x\in \F_2^n}f(x)^2.\]
For functions $f,g:\F_2^n\to \R$, the convolution of $f$ and $g$ is defined as 
\[(f*g)(x) := \sum_{y\in \F_2^n}f(y)g(x-y),\]
and recall that we have $\widehat{f*g}(\xi) = \hat{f}(\xi)\hat{g}(\xi)$.

We will also use some standard facts about the projective space over $\F_2$. Let $\PG(n,2)$ denote the $n$-dimensional projective space over $\F_2$, consisting of the nonzero vectors in $\F_2^{n+1}$. Namely, we identify $\PG(n,2) \cong \F_2^{n+1}\setminus \{0\}$. Moreover, note that a projective $k$-dimensional subspace in $\PG(n,2)$ corresponds to a $(k+1)$-dimensional subspace in $\F_2^{n+1}$. We recall the definition of a blocking set.

\begin{definition}[Blocking set]\label{def:blocking-set}
A set $B\subseteq \PG(n,2)$ is called a blocking set with respect to $k$-subspaces if $B\cap U\ne \varnothing$ for all $k$-dimensional projective subspaces $U\subseteq \PG(n,2)$.
\end{definition}

We will use the Bose--Burton theorem that gives a lower bound for the size of a blocking set in $\PG(n,2)$.

\begin{theorem}[Bose--Burton~\cite{BoseBurton1966}]\label{thm:bose-burton}
Let $B\subseteq \PG(n,2)$ be a blocking set with respect to $k$-subspaces. Then 
\[|B|\ge |\PG(n-k,2)| = 2^{n-k+1}-1,\]
where equality holds when $B$ is an $(n-k)$-dimensional projective subspace.
\end{theorem}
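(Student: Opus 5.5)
The plan is to prove the contrapositive by a greedy construction. Suppose $B\subseteq \PG(n,2)\cong \F_2^{n+1}\setminus\{0\}$ satisfies $|B|< 2^{n-k+1}-1$. I will build a $(k+1)$-dimensional linear subspace $V\subseteq \F_2^{n+1}$ with $V\cap B=\varnothing$. Such a $V$ is a projective $k$-subspace missed by $B$, so $B$ is not blocking.

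I build a chain $0=V_0\subset V_1\subset\cdots\subset V_{k+1}$ with $\dim V_i=i$ and $(V_i\setminus\{0\})\cap B=\varnothing$.

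\textbf{Inductive step.} Suppose $V_i$ has been constructed for some $i\le k$.
\begin{enumerate}
\item Consider the quotient map $\pi:\F_2^{n+1}\to \F_2^{n+1}/V_i\cong \F_2^{n+1-i}$. By the induction hypothesis no element of $B$ lies in $V_i$, so $\pi(B)$ consists of nonzero vectors.
\item We have $|\pi(B)|\le |B|<2^{n-k+1}-1\le 2^{n+1-i}-1$, using $i\le k$. The right-hand side is the number of nonzero vectors of the quotient.
\item Hence some nonzero class $c\in \F_2^{n+1}/V_i$ is not hit by $B$. Choose any $v$ with $\pi(v)=c$ and set $V_{i+1}=V_i+\mathrm{span}(v)$.
\item Every element of $V_{i+1}\setminus V_i$ lies in the coset $v+V_i=\pi^{-1}(c)$, which avoids $B$. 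Elements of $V_i\setminus\{0\}$ avoid $B$ by induction. So $V_{i+1}$ avoids $B$ and has dimension $i+1$.
\end{enumerate}
After $k+1$ steps, $V_{k+1}$ is the desired subspace. This gives $|B|\ge 2^{n-k+1}-1=|\PG(n-k,2)|$.

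\textbf{Equality case.} Let $B$ be the nonzero vectors of an $(n-k+1)$-dimensional linear subspace $W$. For any $(k+1)$-dimensional $V$,
\[\dim(V\cap W)\ge (k+1)+(n-k+1)-(n+1)=1,\]
so $V\cap W$ contains a nonzero vector. Thus $B$ is blocking, and $|B|=2^{n-k+1}-1$.

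The only delicate point is the counting in steps (1)–(2): we need $0\notin\pi(B)$, which holds because $B\cap V_i=\varnothing$, and the size comparison must hold at every step up to $i=k$. The inequality is tightest at $i=k$, where the strict hypothesis $|B|<2^{n-k+1}-1$ is exactly what is needed. No other obstacle is expected.
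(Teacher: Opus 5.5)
The paper does not prove this statement: it quotes it from Bose and Burton and uses only the inequality, with $n=d$ and projective dimension $k+1$, to get $|B_F|\ge 2^{d-k}-1$. Your argument is a correct, self-contained proof of exactly what is stated.

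\begin{itemize}
\item \emph{The greedy step works.} At stage $i\le k$ you have $0\notin\pi(B)$ because $B\cap V_i=\varnothing$. The quotient $\F_2^{n+1}/V_i$ has
\[2^{n+1-i}-1\;\ge\; 2^{n-k+1}-1\;>\;|B|\;\ge\;|\pi(B)|\]
nonzero classes, so some nonzero class $c$ is missed. Over $\F_2$ the new part $V_{i+1}\setminus V_i$ is the single coset $\pi^{-1}(c)$, so avoiding one class suffices. The degenerate cases $k=0$ and $k=n$ go through as well.
\item \emph{The attainment case is right.} The dimension count $\dim(V\cap W)\ge 1$ correctly shows that an $(n-k)$-dimensional projective subspace is blocking.
\end{itemize}

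Compared with citing the original theorem, your route gives an elementary, few-line proof of precisely the part this paper needs. It also adapts to $\F_q$ if you count projective points of the quotient instead of nonzero classes, since then $V_{i+1}\setminus V_i$ is the union of the cosets $\lambda v+V_i$ with $\lambda\neq 0$. What it does not give is the converse half of Bose--Burton: that every blocking set of size exactly $2^{n-k+1}-1$ is an $(n-k)$-dimensional subspace. That characterization is the harder content of the original result, but it is neither asserted in the statement as quoted nor used anywhere in this paper.
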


\subsection{Lower bound construction.} For completeness, we include a proof of \cref{prop:flat-lower-bound} which is similar to the proof of Theorem 2 in \cite{alon2024hypercubestatistics}.

\begin{proof}[Proof of \cref{prop:flat-lower-bound}]
    Let $s = j\cdot 2^k$ for $j$ odd and $k\ge 0$. Let $B:\F_2^n\to \F_2^{d-k}$ be a surjective linear map and let $S\subseteq \F_2^{d-k}$ be a subset of size $|S| = j$. Then consider the subset
    \[A := B^{-1}(S) = \{x\in \F_2^{n}\mid B(x)\in S\}\subseteq \F_2^n.\]
    We show that $\lambda^*(n,d,s,A) \ge c_n(d,k)$. Let $Q = x_0+U$ be a $d$-flat where $x_0\in \F_2^n$ and $U\in \Gr(n,d)$. Then note that 
    \[|Q\cap A| = 2^{d-\rank(B|_U)}\cdot |S\cap B(Q)|,\]
    where $B(Q) = \{B(x) \mid x\in Q\}$. It is clear that $|Q\cap A| = s$ if $\rank(B|_U) = d-k$. (In fact, if $k > 0$, we have $|Q\cap A| = s$ if and only if $\rank(B|_U) = d-k$.) Thus by definition, we have 
    \[\lambda^*(n,d,s,A) = \P[|Q\cap A| = s]\ge \P[\rank(B|_U) = d-k] = c_n(d,k).\]
    Thus, we can conclude that $\lambda^*(d,s) \ge \lim_{n\to \infty} \lambda^*(n,d,s,A) = \lim_{n\to \infty}c_n(d,k) = c(d,k)$.
\end{proof}

\begin{remark}
    We remark that for the axis-aligned cube statistics, one can choose $B$ to be a linear map defined by a $(d-k)\times n$ matrix of rank $d-k$ with all columns being nonzero vectors in $\F_2^{d-k}$. Then the lower bound can be improved to the probability of a $(d-k)\times d$ matrix with each column being an independent random nonzero vector in $\F_2^{d-k}$ having rank $d-k$ as remarked in \cite{alon2024hypercubestatistics}. In the affine flat setting, a $(d- k)\times d$ matrix arises from restricting a surjective linear map $B$ to a random $d$-subspace $U$. Since the basis vectors of $U$ may lie in $\ker B$, the induced columns are not forced to be nonzero, so one cannot get such an improvement.
\end{remark}

\section{$s$ even}\label{sec:s-even}

In this section, we present the proofs of \cref{thm:s-2d-1} and \cref{thm:s-even}, giving upper bounds for $\lambda^*(d,s)$ when $s$ is even. Let us begin with the simpler special case where $s = 2^{d-1}$.

\subsection{The case $s = 2^{d-1}$}

From now on, fix $d> 1$ and $n\ge d$. In the case where $s = 2^{d-1}$, we observe a clear distinction between the cube statistics and flat statistics. Recall that when $s = 2^{d-1}$, in the cube statistics setting, we have $\lambda(n,d,s) = 1$ which is achieved by taking the set $A\subseteq \F_2^n$ containing all the points in $\F_2^n$ with an even number of $1$'s or all the points with an odd number of $1$'s. Equivalently, $A$ is the hyperplane defined by the linear equation $\ang{x, \vec{1}} = 0$ or $\ang{x, \vec{1}} = 1$. Additionally, it was remarked in \cite{alon2024hypercubestatistics} that one can get more constructions of $A\subseteq \F_2^n$ by choosing an $(n-d+t)$-subcube $Q$ for $1\le t\le d-1$ and replacing the set $A\cap Q$ with its complement.

First we compute the flat statistics for the set $A$ of the points on the hyperplane $\ang{x, \vec{1}} = 0$. For convenience, let $L(x) = \ang{x, \vec{1}}$. For a $d$-flat $Q = x_0+U\subseteq \F_2^n$ where $U\in \Gr(n,d)$, note that if $L|_U \not\equiv 0$, then $|Q\cap A| = 2^{d-1}$; if $L_U\equiv 0$, then $|Q\cap A|\in\{0,2^d\}$. Thus we have that 
\[\lambda^*(n,d,s,A) = 1- \P_{U\sim \Gr(n,d)}[U\subseteq \ker L] = 1- \frac{\binom{n-1}{d}_2}{\binom{n}{d}_2} = 1- \prod_{i = 0}^{d-1}\frac{2^{n-1-i}-1}{2^{n-i}-1},\]
where the probability $\P_{U\sim \Gr(n,d)}[U\subseteq \ker L]$ is over a uniformly random element $U$ in $\Gr(n,d)$. In particular, we have
\[\lambda^*(d,s) \ge \lim_{n\to \infty}\lpr{1-\prod_{i = 0}^{d-1}\frac{2^{n-1-i}-1}{2^{n-i}-1}}= 1-2^{-d} < 1.\]

We show that this is optimal in the following lemma using a second moment argument.

\begin{lemma}
    For $d> 1$ and $n \ge d$, for any $A\subseteq \F_2^n$ we have
    \[\lambda^*(n,d,2^{d-1},A)\le 1- \frac{\binom{n-1}{d}_2}{\binom{n}{d}_2}.\]
    Consequently,
    \[\lambda^*(d,2^{d-1}) = 1 - 2^{-d}.\]
\end{lemma}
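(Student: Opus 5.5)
The plan is a second moment argument on a $\pm1$ encoding of $A$, computed with the unnormalized Fourier transform. Set $g(x) = 1 - 2\cdot 1_A(x)\in\{\pm 1\}$. For a $d$-flat $Q$ let $S(Q) = \sum_{x\in Q} g(x) = 2^d - 2|Q\cap A|$. Then $|Q\cap A| = 2^{d-1}$ exactly when $S(Q) = 0$, so it suffices to show
\[\P_{Q\sim \Aff(n,d)}[S(Q)\ne 0]\ge p := \frac{\binom{n-1}{d}_2}{\binom{n}{d}_2}.\]
Since $|S(Q)|\le 2^d$, we have $S(Q)^2 \le 4^d\cdot \mathbf{1}[S(Q)\neq 0]$. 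Hence $\P[S(Q)\ne 0]\ge 4^{-d}\,\E[S(Q)^2]$, and the task reduces to proving $\E[S(Q)^2]\ge 4^d p$.

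To compute the second moment, write $Q = x_0 + U$ with $x_0$ uniform in $\F_2^n$ and $U$ uniform in $\Gr(n,d)$. This yields the uniform distribution on $\Aff(n,d)$, since each flat has exactly $2^d$ representatives $x_0$. By the inversion formula,
\[S(x_0+U) = 2^{-n}\sum_{\xi}\hat g(\xi)(-1)^{\ang{\xi,x_0}}\sum_{u\in U}(-1)^{\ang{\xi,u}} = 2^{d-n}\sum_{\xi\in U^\perp}\hat g(\xi)(-1)^{\ang{\xi,x_0}}.\]
Averaging the square over $x_0$ and using orthogonality of characters gives
\[\E_{x_0}S^2 = 2^{2d-2n}\sum_{\xi\in U^\perp}\hat g(\xi)^2.\]
Next I average over $U$. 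For $\xi = 0$ the event $\xi\in U^\perp$ always holds. For $\xi\neq 0$ it is the event $U\subseteq \ker\xi$, where $\ker\xi$ is a hyperplane, so its probability is exactly $\binom{n-1}{d}_2/\binom{n}{d}_2 = p$. Therefore
\[\E[S^2] = 2^{2d-2n}\Big(\hat g(0)^2 + p\sum_{\xi\ne 0}\hat g(\xi)^2\Big)\ge 2^{2d-2n}\,p\sum_\xi \hat g(\xi)^2,\]
using $p\le 1$. By Parseval, $\sum_\xi\hat g(\xi)^2 = 2^n\cdot 2^n$ because $g^2\equiv 1$. This gives $\E[S^2]\ge 4^d p$, and hence the first claim.

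For the consequence, take $n\to\infty$. The ratio $p$ tends to $2^{-d}$, so the lemma gives the upper bound $\lambda^*(d,2^{d-1})\le 1-2^{-d}$. The hyperplane construction computed just before the lemma gives the matching lower bound. There is no real obstacle here. The one point that needs care is that the crude inequality $S^2\le 4^d\,\mathbf{1}[S\ne0]$ loses nothing, and it does not: the computation shows that equality is approached precisely when all Fourier mass sits on a single nonzero character, which is the hyperplane example.
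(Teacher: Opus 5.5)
Your proof is correct and is essentially the paper's argument. It uses the same $\pm1$ encoding $g=1-2\cdot 1_A$, the bound $\P[S\neq 0]\ge 4^{-d}\,\E[S^2]$, Parseval in the translate $x_0$ for fixed $U$, and averaging over $U$ via $\P_U[\xi\in U^\perp]\ge \binom{n-1}{d}_2/\binom{n}{d}_2$ together with $\sum_\xi \hat g(\xi)^2=2^{2n}$; the only cosmetic difference is that you expand $S(x_0+U)$ directly with the inversion formula, whereas the paper uses the convolution identity $\widehat{g*1_U}=\hat g\cdot\widehat{1_U}$.
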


\begin{proof}
First note that to sample a uniformly random $d$-flat $Q$ from $\Aff(n,d)$, one can first sample a uniformly random linear $d$-subspace $U\in \Gr(n,d)$ and then sample a uniformly random $x_0\in \F_2^n$. Then taking $Q = x_0+U$ gives a uniformly random $d$-flat. In this proof, we will consider a uniformly random $Q\in \Aff(n,d)$ sampled in this way.

Let $f=1_A$ and consider $h:=1-2f$. Note that $h:\F_2^n\to \{\pm 1\}$ where $h(x) = 1$ if $x\not\in A$ and $h(x) = -1$ if $x\in A$. For a linear $d$-subspace $U\in \Gr(n,d)$ define
\[H_U(x):=\sum_{u\in U} h(x+u)=(h*1_U)(x),\]
and notice that for a $d$-flat $Q=x_0+U$ with $x_0\in \F_2^n$ we have
\[\sum_{y\in Q} h(y)=H_U(x_0)=2^d-2|A\cap Q|.\]
In particular, $|A\cap Q|=2^{d-1}$ if and only if $H_U(x_0)=0$. 

Now we fix $U\in \Gr(n,d)$ and let $x\in \F_2^n$ be a uniformly random element. Since $|H_U(x)|\le 2^d$ for all $x\in \F_2^n$, we have $H_U(x)^2\le 2^{2d}1_{H_U(x)\ne 0}$ and thus 
\[\P[H_U(x)=0]\le 1-\frac{\E[H_U(x)^2]}{2^{2d}}.\]
Note that for $U\in \Gr(n,d)$, we have $\widehat{1_U}(\xi) = |U|\cdot 1_{\xi\in U^{\perp}}$, so we have 
\[\widehat{H_U}(\xi) = \widehat{(h * 1_U)}(\xi) =\widehat{h}(\xi)\cdot\widehat{1_U}(\xi)
=\widehat{h}(\xi)\cdot |U|\cdot 1_{\{\xi\in U^\perp\}}
=2^d \cdot \widehat{h}(\xi)\cdot 1_{\{\xi\in U^\perp\}}.\]
Applying Parseval's identity gives
\[\E[H_U(x)^2]=2^{-n}\sum_x H_U(x)^2 =2^{-2n}\sum_{\xi\in \F_2^n}\widehat{H_U}(\xi)^2 =2^{2d-2n}\cdot \sum_{\xi\in U^\perp}\widehat{h}(\xi)^2.\]
Thus we have
\[ \P[H_U(x)=0] \le 1-\frac{2^{2d-2n}}{2^{2d}}\sum_{\xi\in U^\perp}\widehat{h}(\xi)^2 =1-2^{-2n}\sum_{\xi\in U^\perp}\widehat{h}(\xi)^2. \]

Recall that we can sample a uniformly random $Q\in \Aff(n,d)$ by first choosing $U\in \Gr(n,d)$ uniformly at random, and then choosing $x$ uniformly from $\F_2^n$ and setting $Q=x+U$. So we have
\begin{align*}
    \lambda^*(n,d,2^{d-1},A) &= \P_{Q\sim \Aff(n,d)}[|A\cap Q| = 2^{d-1}] = \E_U[\P_x[|A\cap (x+U)|= 2^{d-1}]]\\
    & =\E_U[\P_x[H_U(x)=0]] \le 1-2^{-2n}\sum_{\xi\in \F_2^n}\widehat{h}(\xi)^2\cdot \P_U[\xi\in U^\perp],
\end{align*}
where the last inequality comes from exchanging the order of expectation and summation.

For a uniformly random $U\in \Gr(n,d)$, if $\xi\ne 0$, we have 
\[\P_U[\xi\in U^\perp] = \frac{\binom{n-1}{d}_2}{\binom{n}{d}_2};\]
and if $\xi = 0$, we have $\P_U[0\in U^{\perp}] = 1$. Therefore, we have $\P_U[\xi\in U^\perp] \ge \binom{n-1}{d}_2/\binom{n}{d}_2$.

Furthermore, by Parseval's identity on $h$, we have
\[\sum_{\xi\in \F_2^n}\hat{h}(\xi)^2 = 2^n\sum_{x\in \F_2^n}h(x)^2 = 2^n\cdot 2^n = 2^{2n}.\]
Thus, we can conclude that 
\[\lambda^*(n,d,2^{d-1},A) \le 1- 2^{-2n} \cdot \frac{\binom{n-1}{d}_2}{\binom{n}{d}_2}\cdot  \sum_{\xi\in \F_2^n}\hat{h}(\xi)^2 = 1- 2^{-2n} \cdot 2^{2n}\cdot \frac{\binom{n-1}{d}_2}{\binom{n}{d}_2} = 1- \frac{\binom{n-1}{d}_2}{\binom{n}{d}_2}.\]
Consequently, we have
\[\lambda^*(d,2^{d-1}) \le \lim_{n\to \infty}\lpr{1- \frac{\binom{n-1}{d}_2}{\binom{n}{d}_2}} = 1-2^{-d}.\]
Combined with the lower bound construction presented previously, we can conclude that $\lambda^*(d,2^{d-1}) = 1-2^{-d}$.
\end{proof}

\begin{remark}
    In fact, the proof of \cref{thm:s-2d-1} shows that for any subset $A\subseteq \F_2^n$ with size $|A|\ne 2^{n-1}$ we have $\lambda^*(n,d,2^{d-1}, A)<1 - \binom{n-1}{d}_2/\binom{n}{d}_2$. Indeed, since $\hat{h}(0) = 2^n-2|A|$, we have $\hat{h}(0)^2 > 0$ if $|A|\ne 2^{n-1}$.
\end{remark}

\subsection{The case $s = j\cdot 2^k$}

In this subsection, we present the proof of \cref{thm:s-even}. We first prove a weaker upper bound for $\lambda^*(d,s)$ for all $s = j\cdot 2^k$ with $j$ odd and $k\ge 0$ stated in the following proposition. In particular, the following proposition already gives the correct asymptotics for $1-\lambda^*(d,s)$ for even $s$.

\begin{prop}\label{prop:s-even}
     Let $d\ge 1$ and $1<s<2^d$ and $n\ge d+1$. Suppose $s = j\cdot 2^k$ where $j$ is odd and $k\ge 0$, then for any $A\subseteq \F_2^n$ we have 
    \[\lambda^*(n,d,s,A)\le 1 - \frac{2^{d-k}-1}{2^{d+1}-1}.\]
\end{prop}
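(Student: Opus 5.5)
The plan is to double count over $(d+1)$-flats, which is possible because $n\ge d+1$. Sample a uniformly random $(d+1)$-flat $R$ and then a uniformly random $d$-flat $Q\subseteq R$. Every $d$-flat lies in the same number of $(d+1)$-flats, so $Q$ is uniform in $\Aff(n,d)$. It therefore suffices to show that, for every fixed $(d+1)$-flat $R$, at least a $(2^{d-k}-1)/(2^{d+1}-1)$ fraction of the $d$-flats inside $R$ satisfy $|Q\cap A|\ne s$.

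Fix $R$ and translate it so that it becomes a linear $(d+1)$-space $V\cong\F_2^{d+1}$. Let $f=1_{A\cap R}$ be viewed as a function on $V$. The $d$-flats inside $R$ come in $2^{d+1}-1$ parallel pairs $\{Q,Q'\}$, one pair for each nonzero $\xi\in V$ (the two cosets of $\ker\xi$). For such a pair,
\[|Q\cap A|+|Q'\cap A|=|R\cap A|=:m \quad\text{and}\quad |Q\cap A|-|Q'\cap A|=\pm\hat f(\xi).\]

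\emph{Case $m\ne 2s$.} In each pair at most one member meets $A$ in exactly $s$ points. So the bad fraction is at least $1/2$, which is at least $(2^{d-k}-1)/(2^{d+1}-1)$.

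\emph{Case $m=2s=j\cdot 2^{k+1}$.} Both members of the pair for $\xi$ are good exactly when $\hat f(\xi)=0$. Let $Z=\{\xi\ne 0:\hat f(\xi)=0\}$ and $B=\PG(d,2)\setminus Z$. Every $\xi\in B$ contributes at least one bad flat out of its two. Hence it suffices to show $|B|\ge 2^{d-k}-1$, which gives a bad fraction of at least $|B|/(2(2^{d+1}-1))$.

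\emph{Key step.} Suppose $Z$ contained all nonzero vectors of a linear subspace $W\le V$ of dimension $t$. Then the function $\eta\mapsto\hat f(\eta)$ restricted to $W$ is supported on $\{0\}$. By Fourier inversion on the quotient, $f$ has the same mass on every coset of $W^\perp$. There are $2^t$ such cosets, so $2^t\mid m=j\cdot 2^{k+1}$, which forces $t\le k+1$. Consequently $Z$ contains no projective $(k+1)$-subspace, i.e. no linear $(k+2)$-space. This means $B$ is a blocking set in $\PG(d,2)$ with respect to projective $(k+1)$-subspaces; these exist since $k+1\le d$, as $s<2^d$. Bose--Burton (\cref{thm:bose-burton}) then gives $|B|\ge 2^{d-(k+1)+1}-1=2^{d-k}-1$.

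Averaging over $R$ completes the proof. The main point to verify carefully is the Fourier step: that vanishing of $\hat f$ on $W\setminus\{0\}$ forces equidistribution of $A\cap R$ over the cosets of $W^\perp$. This follows because the pushforward of $f$ to $V/W^\perp\cong W^*$ has Fourier transform equal to $\hat f|_W$.
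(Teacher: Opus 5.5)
Your strategy is the paper's: average over $(d+1)$-flats, split on whether $|A\cap R|=2s$, show that the nonzero Fourier support of $1_{A\cap R}$ blocks projective $(k+1)$-subspaces, and apply Bose--Burton. However, the counting in the case $m=2s$ loses a factor of $2$, so as written the argument does not reach the stated bound. You credit each $\xi\in B$ with only ``at least one bad flat out of its two'', which gives a bad fraction of at least $|B|/(2(2^{d+1}-1))$. Combined with $|B|\ge 2^{d-k}-1$, this yields only $\lambda^*(n,d,s,A)\le 1-\frac{2^{d-k}-1}{2(2^{d+1}-1)}$, roughly $1-2^{-k-2}$ instead of $1-2^{-k-1}$. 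So under your own accounting, the sentence ``it suffices to show $|B|\ge 2^{d-k}-1$'' is false. Nor can the lost factor come from a better bound on $|B|$. If $A\cap R$ is the preimage of a $j$-point set under a surjective linear map $V\to\F_2^{d-k}$, then $|A\cap R|=2s$ and $\hat f$ is supported in a $(d-k)$-dimensional subspace, so $|B|=2^{d-k}-1$ actually occurs.

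The fix is one line, and it is what the paper uses. When $m=2s$, the two counts in a pair satisfy $|Q\cap A|+|Q'\cap A|=2s$, so if one equals $s$ then so does the other. Each pair is therefore entirely good or entirely bad. For $\xi\in B$ we have $\hat f(\xi)\neq 0$, so the two counts differ and \emph{both} flats are bad. The bad fraction is then exactly $2|B|/(2(2^{d+1}-1))=|B|/(2^{d+1}-1)\ge (2^{d-k}-1)/(2^{d+1}-1)$. With this correction the rest of your argument goes through and matches the paper's proof. That includes the equidistribution step, which is the paper's \cref{claim:divisibility}, and your pushforward justification of it is fine.
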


Note that \cref{prop:s-even} already gives a slightly weaker upper bound on $\lambda^*(d,s)$ since we have
\[\lambda^*(d,s)  = \lim_{n\to \infty} \max_{A\subseteq \F_2^n}\lambda^*(n,d,s,A) \le 1 - \frac{2^{d-k}-1}{2^{d+1}-1} = 1-2^{-(k+1)} + O(2^{-(d-k)}).\] 
In particular, this already determines the correct asymptotics for $1-\lambda^*(d,s)$ where $s = j\cdot 2^k$ with $j$ odd and $k\ge 1$. Our proof is an averaging argument over $(d+1)$-flats.

\begin{proof}[Proof of \cref{prop:s-even}]
    Fix $A\subseteq \F_2^n$ and $s = j\cdot 2^k$ where $j$ is odd and $k \ge 0$. For each $(d+1)$-flat $F\in \Aff(n,d+1)$, we define the local statistics
    \[\lambda_F:= \P_{\substack{Q\subset F\\ \dim Q=d}}[|Q\cap A| = s],\]
    where the probability is over a uniformly random $d$-flat $Q$ contained in $F$. Note that by definition, we have
    \[\lambda^*(n,d,s,A) = \E_{F\sim \Aff(n,d+1)}[\lambda_F],\]
    where the expectation is over a uniformly random $(d+1)$-flat $F$. Therefore, it suffices to upper bound $\lambda_F$ for every $F\in \Aff(n,d+1)$. We will show that for every fixed $F\in \Aff(n,d+1)$, we have 
    \[\lambda_F\le 1- \frac{2^{d-k}-1}{2^{d+1}-1}.\]

    Fix any $F\in \Aff(n,d+1)$ and let $S = F\cap A$. From now on, we identify $F$ with $\F_2^{d+1}$. For $\xi\ne 0\in \F_2^{d+1}$, we define the hyperplanes $H_{\xi, 0} = \{x\in \F_2^{d+1} \mid \ang{\xi, x} = 0\}$ and $H_{\xi, 1} = \{x\in \F_2^{d+1} \mid \ang{\xi, x} = 1\}$.
    Then note that $H_{\xi,0}$ and $H_{\xi,1}$ are two disjoint parallel $d$-flats whose union is $\F_2^{d+1}$. Thus we have
    \[|S| = |S\cap H_{\xi,0}| + |S\cap H_{\xi,1}|.\]
    Now we bound $\lambda_F$ by distinguishing two cases depending on whether $|S| = 2s$. 

    \medskip\noindent\emph{Case 1: $|S|\neq 2s$.} In this case, notice that in every possible hyperplane direction $\xi\ne 0 \in \F_2^{d+1}$, at most one of $|S\cap H_{\xi,0}|$ and $|S\cap H_{\xi,1}|$ can be equal to $s$. Therefore, it is clear that $\lambda_F\le 1/2$. Since $1- \frac{2^{d-k}-1}{2^{d+1}-1} \ge 1/2$ for all $d> k\ge 0$, this case is proved.

    \medskip\noindent\emph{Case 2: $|S|= 2s$.} In this case, for every hyperplane direction $\xi\ne 0 \in \F_2^{d+1}$, either both $|S\cap H_{\xi,0}|$ and $|S\cap H_{\xi,1}|$ are equal to $s$, or neither of them are equal to $s$. Therefore it suffices to show a lower bound for the number of $\xi \ne 0$ such that $|S\cap H_{\xi,0}|$ and $|S\cap H_{\xi,1}|$ are both not equal to $s$. For convenience, we denote $B_F = \{\xi\ne 0\in \F_2^{d+1}\mid |S\cap H_{\xi,0}|\ne s\}$ and note that 
    \[\lambda_F = 1 - \frac{|B_F|}{2^{d+1}-1},\]
    where the $2^{d+1}-1$ in the denominator is the number of directions $\xi\ne 0 \in \F_2^{d+1}$ of the hyperplanes. Let $f := 1_S$ be the indicator function of $S$ in $\F_2^{d+1}$.
    \begin{claim}\label{claim:B-supp}
        We have $B_F = \supp(\hat{f})\setminus\{0\}$.
    \end{claim}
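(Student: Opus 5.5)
We prove the two inclusions at once by computing $\hat f(\xi)$ directly for a nonzero direction $\xi\in\F_2^{d+1}$, using the fact that we are in Case 2, so $|S| = 2s$. The approach is to express $\hat f(\xi)$ in terms of the sizes of $S$ on the two parallel hyperplanes $H_{\xi,0}$ and $H_{\xi,1}$.

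Split the sum defining the unnormalized transform according to the value of $\ang{\xi,x}$:
\[\hat f(\xi)=\sum_{x\in S}(-1)^{\ang{\xi,x}} = |S\cap H_{\xi,0}| - |S\cap H_{\xi,1}|.\]
Since $H_{\xi,0}$ and $H_{\xi,1}$ partition $\F_2^{d+1}$, we have $|S\cap H_{\xi,1}| = |S| - |S\cap H_{\xi,0}| = 2s - |S\cap H_{\xi,0}|$. Substituting gives
\[\hat f(\xi) = 2\bigl(|S\cap H_{\xi,0}| - s\bigr).\]

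Hence, for $\xi\neq 0$, we have $\hat f(\xi)\neq 0$ if and only if $|S\cap H_{\xi,0}|\neq s$, which is exactly the condition $\xi\in B_F$. Since $B_F$ consists of nonzero vectors by definition, this shows $B_F=\supp(\hat f)\setminus\{0\}$.

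No step here is a real obstacle. The only point to watch is that the identity relies on $|S|=2s$, which holds because we are in Case 2.
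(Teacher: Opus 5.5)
Your proof is correct and follows the paper's argument: you write $\hat f(\xi)=|S\cap H_{\xi,0}|-|S\cap H_{\xi,1}|$ and use $|S|=2s$ to conclude that $\hat f(\xi)=0$ exactly when $|S\cap H_{\xi,0}|=s$. The paper uses $|S|=2s$ only implicitly, whereas your identity $\hat f(\xi)=2\bigl(|S\cap H_{\xi,0}|-s\bigr)$ makes that step explicit.
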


    \begin{proof}
        Notice that by definition, for $\xi\ne 0\in \F_2^{d+1}$ we have
        \[\hat{f}(\xi) = \sum_{x\in \F_2^{d+1}}1_S(x)(-1)^{\ang{\xi,x}} = |S\cap H_{\xi,0}| - |S\cap H_{\xi,1}|.\]
        Then indeed $|S\cap H_{\xi,0}| = |S\cap H_{\xi,1}| = s$ if and only if $\hat{f}(\xi) = 0$. Thus we can conclude that $B_F = \supp(\hat{f})\setminus\{0\}$.
    \end{proof}

    Now it suffices to lower bound $\supp(\hat{f})$. The crucial observation is that $\supp(\hat{f})$ is a blocking set with respect to projective $(k+1)$-subspaces in $\PG(d,2)$, as shown by the following claim.

    \begin{claim}\label{claim:divisibility}
        Let $L\subseteq F\cong \F_2^{d+1}$ be a $t$-dimensional linear subspace for some $0\le t\le d$. If $\hat{f}(\xi) = 0$ for all $\xi \in L\setminus\{0\}$, then every coset of $L^\perp$ contains exactly $|S|/2^t$ points of $S$. In particular, $|S|$ is divisible by $2^t$.
    \end{claim}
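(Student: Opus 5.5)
The idea is to compute the restriction of $f = 1_S$ to a coset of $L^\perp$ by Fourier inversion, and observe that only characters in $L$ contribute. Fix a coset $y + L^\perp$. Its indicator is $1_{y+L^\perp}(x) = 1_{L^\perp}(x - y)$. We have $\widehat{1_{L^\perp}}(\xi) = |L^\perp|\cdot 1_{\xi\in (L^\perp)^\perp} = 2^{d+1-t}\, 1_{\xi\in L}$, using $(L^\perp)^\perp = L$ for the nondegenerate form on $\F_2^{d+1}$. Translating by $y$ gives
\[\widehat{1_{y+L^\perp}}(\xi) = 2^{d+1-t}(-1)^{\ang{\xi,y}}1_{\xi\in L}.\]

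Next, write the count as an inner product and apply Plancherel (the polarized form of Parseval's identity already recalled):
\[|S\cap (y+L^\perp)| = \sum_x f(x)1_{y+L^\perp}(x) = 2^{-(d+1)}\sum_{\xi}\hat f(\xi)\widehat{1_{y+L^\perp}}(\xi) = 2^{-t}\sum_{\xi\in L}\hat f(\xi)(-1)^{\ang{\xi,y}}.\]
By hypothesis $\hat f$ vanishes on $L\setminus\{0\}$, so only $\xi = 0$ survives. Since $\hat f(0) = |S|$, every coset of $L^\perp$ contains exactly $|S|/2^t$ points of $S$.

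The divisibility statement follows at once: a count of points is an integer, so $2^t$ divides $|S|$. Alternatively, one can just note that the $2^t$ cosets partition $\F_2^{d+1}$ and each carries the same count.

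The only point requiring care is keeping the normalization of the unnormalized transform consistent; in particular, the Plancherel constant $2^{-(d+1)}$ must combine with $|L^\perp| = 2^{d+1-t}$ to give exactly $2^{-t}$. There is no real obstacle beyond this.
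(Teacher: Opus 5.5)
Your proof is correct and is essentially the paper's argument: both derive the identity $2^t\,|S\cap(y+L^\perp)| = \sum_{\xi\in L}\hat f(\xi)(-1)^{\ang{\xi,y}}$ and then use the vanishing of $\hat f$ on $L\setminus\{0\}$ together with $\hat f(0)=|S|$. The only difference is packaging: the paper groups the sum defining $\hat f(\xi)$ by cosets of $L^\perp$ and inverts on the quotient by character orthogonality, whereas you get the same identity in one step by Plancherel against $1_{y+L^\perp}$, whose transform you correctly compute as $2^{d+1-t}(-1)^{\ang{\xi,y}}1_{\xi\in L}$.
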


    \begin{proof}
        Consider the orthogonal complement $L^\perp$ of codimension $t$ and note that $F$ is a disjoint union of $2^t$ cosets of $L^\perp$. For each coset $a + L^\perp$ where $a\in F/L^\perp$, we define 
        \[c(a) = |S\cap (a+L^\perp)|.\]
        Note that we have $|S| = \sum_{a\in F/L^\perp}c(a)$.

        Fix $\xi\in L$, note that the value $(-1)^{\ang{\xi, x}}$ is constant on each coset of $L^\perp$. Indeed, for $x$ and $x'$ lying in the same coset, we have $x-x'\in L^\perp$. So we must have $\ang{\xi,x'} = \ang{\xi,x}+\ang{\xi,x'-x} = \ang{\xi,x}$. Therefore, we can rewrite $\hat{f}(\xi)$ as
        \[\hat{f}(\xi) = \sum_{a\in F/L^{\perp}}\sum_{x\in S\cap (a+L^\perp)}(-1)^{\ang{\xi, x}} = \sum_{a\in F/L^\perp}c(a)(-1)^{\ang{\xi,a}}.\]
        By assumption, for every $\xi\ne 0\in L$, we have
        \[\hat{f}(\xi) =  \sum_{a\in F/L^\perp}c(a)(-1)^{\ang{\xi,a}} = 0.\]
        On the other hand, $\hat{f}(0) =  \sum_{a\in F/L^\perp}c(a) = |S|$. Thus, for any fixed $a_0\in F/L^\perp$ we have
        \begin{align*}
            |S| &= \sum_{\xi\in L}\hat{f}(\xi)(-1)^{\ang{\xi,a_0}} =\sum_{\xi\in L} \sum_{a\in F/L^\perp}c(a)(-1)^{\ang{\xi,a}}(-1)^{\ang{\xi,a_0}} = \sum_{\xi\in L} \sum_{a\in F/L^\perp}c(a)(-1)^{\ang{\xi,a+a_0}}\\
            &= \sum_{a\in F/L^\perp}c(a)\sum_{\xi\in L}(-1)^{\ang{\xi,a+a_0}}.
        \end{align*}
        Note that the inner sum $\sum_{\xi\in L}(-1)^{\ang{\xi,a+a_0}}$ is $2^t$ if $a = a_0$ and $0$ otherwise. Therefore we have
        \[|S| =c(a_0)\cdot 2^t.\]
        Furthermore, the above holds for any arbitrary $a_0\in F/L^\perp$. So, we have $c(a) = |S|/2^t$ for every $a\in F/L^\perp$. On the other hand, since $c(a) = |S\cap (a+L^\perp)|$ is an integer, we have $|S|$ must be divisible by $2^t$.
    \end{proof}

     For any $(k+2)$-dimensional subspace $L$, by \cref{claim:divisibility}, we must have $B_F\cap (L\setminus \{0\})\ne \varnothing$ since $2s$ is not divisible by $2^{k+2}$. In particular, $B_F$ is a blocking set with respect to $(k+1)$-subspaces in $\PG(d,2)$. Thus by \cref{thm:bose-burton}, we have
    \[|B_F|= |\supp(\hat{f})\setminus \{0\}|\ge 2^{d-k}-1.\]
    Thus we can conclude 
    \[\lambda_F\le 1- \frac{2^{d-k}-1}{2^{d+1}-1},\]
    for the case where $|S| = 2s$. This concludes the proof
\end{proof}

Now we are ready to prove \cref{thm:s-even}. The proof proceeds by bootstrapping the upper bound given in \cref{prop:s-even}. More specifically, in the proof of \cref{prop:s-even}, we upper bound $\lambda^*(d,s)$ by upper bounding the local statistics $\lambda_F$ for $(d+1)$-flats $F$. We distinguish two cases based on whether $|A\cap F| = 2s$ and then we simply took the maximum of the two upper bounds instead of computing the average of $\lambda_F$ over all $F\in \Aff(n,d+1)$. The point is that we can upper bound the fraction of $F\in \Aff(n,d+1)$ such that $|A\cap F| = 2s$ using $\lambda^*(d+1,2s)$. Since \cref{prop:s-even} gives an upper bound for all $d'$ and even $s'$, we can iterate this argument and further improve on the constants for $1-\lambda^*(d,s)$.

\begin{proof}[Proof of \cref{thm:s-even}]
    We will show that 
    \begin{equation}
        \lambda^*(d,s)\le \frac{1}{2}\sum_{m=0}^{\infty}\prod_{i=0}^{m-1}\lpr{\frac{1}{2}-\frac{2^{d-k}-1}{2^{d+i+1}-1}}.\label{eq:bootstrap-final}
    \end{equation}
    Note that the above expression evaluates to $1-\frac{2}{3}(1-2^{-(d-k)})\cdot 2^{-k} + O(2^{-2k}) + o_d(1)$. For convenience, let $a_i =\frac{2^{d-k}-1}{2^{d+i+1}-1}$ and note that we have 
    \[a_i =\frac{2^{d-k}-1}{2^{d+i+1}-1} = (1-2^{-(d-k)})2^{-k-i-1} + O(2^{-2k - (d-k) - 2i}).\]
    So we have 
    \[\lambda^*(d,s)\le \frac{1}{2}\sum_{m = 0}^\infty\prod_{i=0}^{m-1}\frac{1}{2}(1-2a_i) = \frac{1}{2}\sum_{m = 0}^\infty 2^{-m}\prod_{i=0}^{m-1}(1-2a_i).\]
    Since for all $m$, we have $\sum_{i=0}^{m-1}a_i\le 2^{-k+1}$, we have 
    \[\prod_{i=0}^{m-1}(1-2a_i) = 1-2\sum_{i = 0}^{m-1}a_i+O(2^{-2k}).\]
    Thus we have
    \[ \lambda^*(d,s)\le  \frac{1}{2}\sum_{m = 0}^\infty 2^{-m}-  \sum_{m = 0}^\infty 2^{-m}\sum_{i = 0}^{m-1}a_i + O(2^{-2k}).\]
    Note that the first sum is equal to $2$ and by changing the order of summation, the second sum evaluates to 
    \[\sum_{i=0}^\infty a_i\sum_{m = i+1}^\infty 2^{-m} = \sum_{i = 0}^\infty a_i\cdot 2^{-i} = (1-2^{-(d-k)})2^{-k-1}\sum_{i = 0}^\infty 4^{-i} = \frac{2}{3}(1-2^{-(d-k)})2^{-k}.\]
    Thus we can conclude that for fixed $k$ we have 
    \[\lambda^*(d,s)\le 1-\frac{2}{3}(1-2^{-(d-k)})\cdot 2^{-k} + O(2^{-2k}) + o_d(1).\]
    as desired.

    Now we show \cref{eq:bootstrap-final}. For each $t\ge 0$, we define $s_t := 2^t\cdot s = j\cdot 2^{k+t}$ and define $\lambda_t:= \lambda^*(d+t, s_t) = \lambda^*(d+t, j\cdot 2^{k+t})$. We will derive a recurrence for $\lambda_t$ by applying the same analysis in the proof of \cref{prop:s-even} to $(d+t+1)$-flats and $s_t$. Fix $t\ge 0$ and $n\ge d+t+1$, and let $A\subseteq \F_2^n$. Define for each $(d+t+1)$-flat $F\in \Aff(n,d+t+1)$ the local statistics
    \[\lambda_F^{(t)} := \P_{\substack{Q\subseteq F\\ \dim Q = d+t}}[|Q\cap A| = s_t],\]
    where the probability is over a uniformly random $(d+t)$-flat contained in $F$. It follows by averaging over all $(d+t+1)$-flats that we have
    \[\lambda^*(n,d+t,s_t, A) = \E[\lambda_F^{(t)}].\]
    Now fix $F\in \Aff(n,d+t+1)$ and let $S: = F\cap A$. We also distinguish two cases depending on whether $|S| = 2s_t$. By the same argument as in proof of \cref{prop:s-even}, we obtain the following: 
    \begin{itemize}
        \item If $|S| \ne 2s_t$, we have $\lambda_F^{(t)}\le 1/2$.
        \item If $|S| = 2s_t$, we have
        \[\lambda_F^{(t)}\le 1 - \frac{2^{(d+t)-(k+t)}-1}{2^{d+t+1}-1} = 1 - \frac{2^{d-k}-1}{2^{d+t+1}-1}.\]
    \end{itemize}
    Thus we have
    \[\lambda^*(n,d+t, s_t,A)\le \frac{1}{2}\P[|F\cap A|\ne 2s_t] + \lpr{1 - \frac{2^{d-k}-1}{2^{d+t+1}-1}}\P[|F\cap A| = 2s_t],\]
    where $F$ is a uniformly random $(d+t+1)$-flat. Note that since $\P[|F\cap A| = 2s_t] = \lambda^*(n,d+t+1,s_{t+1},A)$, we obtain
    \[\lambda^*(n,d+t, s_t,A)\le \frac{1}{2} + \lpr{\frac{1}{2} - \frac{2^{d-k}-1}{2^{d+t+1}-1}}\lambda^*(n,d+t+1,s_{t+1},A).\]
    For convenience, let $c_t := \frac{1}{2} - \frac{2^{d-k}-1}{2^{d+t+1}-1}$. Then taking the maximum over all $A\subseteq \F_2^n$ and taking $n\to\infty$, we get the recurrence
    \[\lambda_t\le \frac{1}{2} + c_t\lambda_{t+1}.\]
    Note that $0\le \lambda_t\le 1$ for all $t\ge 0$ and we have $\lambda_0 = \lambda^*(d,s)$, so it now suffices to show that \cref{eq:bootstrap-final} is the solution to $\lambda_0$.

    For any integer $m\ge 1$, by repeated substitution, we get 
    \[\lambda_0\le \frac{1}{2}\sum_{\ell=0}^{m-1}\prod_{i=0}^{\ell-1}c_i + \left(\prod_{i=0}^{m-1}c_i\right)\lambda_m.\]
    To see this, we induct on $m$. The base case $m=1$ is exactly the recurrence for $\lambda_0$.
    Assuming the inequality holds for $m$, we apply the recurrence for $\lambda_m$ and obtain
    \begin{align*}
        \lambda_0 &\le \frac{1}{2}\sum_{\ell=0}^{m-1}\prod_{i=0}^{\ell-1}c_i + \left(\prod_{i=0}^{m-1}c_i\right)\lambda_m
        \le \frac{1}{2}\sum_{\ell=0}^{m-1}\prod_{i=0}^{\ell-1}c_i + \left(\prod_{i=0}^{m-1}c_i\right)\left(\frac{1}{2}+c_m\lambda_{m+1}\right)\\
        &= \frac{1}{2}\sum_{\ell=0}^{m}\prod_{i=0}^{\ell-1}c_i + \left(\prod_{i=0}^{m}c_i\right)\lambda_{m+1}.
    \end{align*}

    Note that for all $t\ge 0$, we have
    \[0<c_t=\frac{1}{2}-\frac{2^{d-k}-1}{2^{d+t+1}-1}<\frac{1}{2}.\]
    Since $0\le \lambda_m\le 1$, we have
    \[0\le \lpr{\prod_{i=0}^{m-1}c_i}\lambda_m\le \prod_{i=0}^{m-1}c_i\le 2^{-m}\to 0.\]
    Thus, taking $m\to\infty$, we obtain
    \[
    \lambda^*(d,s)=\lambda_0\le \frac{1}{2}\sum_{m=0}^{\infty}\prod_{i=0}^{m-1}c_i,
    \]
    which is exactly \cref{eq:bootstrap-final}.
    
\end{proof}

Together with the lower bound in \cref{prop:flat-lower-bound}, for $s = j\cdot 2^k$ for odd $j$ and $k\ge 1$, we have
\[\lambda^*(d,s) = 1-\Theta(2^{-k}).\]
This shows that in the flat model, the $2$-adic valuation $\nu_2(s)$ controls the statistics $\lambda^*(d,s)$. However, the behavior is likely different for the axis-aligned cube statistics as illustrated in the case where $s$ is odd.

\section{$s$ odd}\label{sec:s-odd}

By \cref{prop:flat-lower-bound} we know that if $s>1$ is odd, then $\lambda^*(d,s)\ge c(d,0)$ where $c_d\to 0.2887$ as $d\to \infty$. On the other hand, \cref{thm:s-even} implies that when $s$ is odd, i.e. $k = 0$, we have $\lambda^*(d,s)\le 1 - \frac{2^d-1}{2^{d+1}-1}$ which tends to $1/2$ as $d\to \infty$. In fact, for $s$ odd, we have an alternative simpler proof of an upper bound of $1/2$. We prove \cref{thm:s-odd} by proving the following stronger statement.

\begin{lemma}\label{lem:s-odd}
    For any $A\subseteq \F_2^n$ and $1\le d < n$, for a uniformly random $d$-flat $F\in\Aff(n,d)$ we have
    \[\P[|A\cap F| \text{ is odd}] \le \frac{1}{2} + \frac{1}{2(2^{n-d+1}-1)}.\]
\end{lemma}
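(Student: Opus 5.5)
The plan is a double-counting argument that conditions on a $(d-1)$-dimensional direction inside the random $d$-flat. I would reduce to a purely combinatorial statement about pairs of cosets. The key observation is that a $d$-flat is exactly the union of two distinct parallel $(d-1)$-flats, and parity is additive over such a union.

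\emph{Step 1 (resampling).} Sample a uniformly random $F\in\Aff(n,d)$ and, independently, a uniformly random $(d-1)$-dimensional linear subspace $V$ of the direction space of $F$. Every $V\in\Gr(n,d-1)$ lies in the same number of $d$-subspaces, and every $d$-subspace contains the same number of $(d-1)$-subspaces. Hence by symmetry the marginal of $V$ is uniform on $\Gr(n,d-1)$.

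Conditional on $V$, the flat $F$ is the union of two distinct cosets of $V$. I will check that this unordered pair of cosets is uniform among all $\binom{M}{2}$ pairs of cosets of $V$, where $M=2^{n-d+1}$. The reason is that each $d$-flat containing a coset of $V$ in its direction corresponds bijectively to such a pair, and each pair arises equally often. As a sanity check, a fixed coset $W$ lies in exactly $M-1=2^{n-d+1}-1$ many $d$-flats, which matches the count $(2^n-2^{d-1})/2^{d-1}$ and explains where the constant in the statement comes from.

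\emph{Step 2 (parity reduction).} Fix $V$ and label its cosets $C_1,\dots,C_M$. Set $b_i=|A\cap C_i|\bmod 2$, and let $m$ be the number of indices with $b_i=1$. For $F=C_i\cup C_j$ we have $|A\cap F|\equiv b_i+b_j\pmod 2$, so $|A\cap F|$ is odd if and only if $b_i\ne b_j$. Therefore
\[\P\bigl[|A\cap F|\text{ odd}\mid V\bigr]=\frac{m(M-m)}{\binom{M}{2}}\le \frac{M^2/4}{M(M-1)/2}=\frac{M}{2(M-1)}=\frac12+\frac{1}{2(2^{n-d+1}-1)}.\]

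\emph{Step 3 (conclusion).} Averaging over $V$ gives the lemma. Theorem~\ref{thm:s-odd} then follows by letting $n\to\infty$, since $|A\cap F|=s$ with $s$ odd forces $|A\cap F|$ to be odd.

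The only real obstacle is Step 1: making sure the two-stage sampling yields the uniform distribution on coset pairs for each fixed $V$. I would handle this by counting triples (flat, subspace $V$, pair). Each $d$-flat contributes exactly $2^d-1$ choices of $V$. For a fixed $V$, each unordered pair of distinct cosets spans exactly one $d$-flat. Since these multiplicities do not depend on the pair, uniformity follows; Step 2 is then routine.
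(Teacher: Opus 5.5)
Your proposal is correct and follows the paper's proof essentially verbatim: the paper also conditions on a uniformly random $U\in\Gr(n,d-1)$, writes $F$ as a uniformly random pair of distinct cosets of $U$, and bounds $2m(N-m)\le N^2/2$ with $N=2^{n-d+1}$. Your double counting in Step 1 (each $d$-flat has $2^d-1$ hyperplane directions, and each pair of distinct cosets of $V$ spans exactly one $d$-flat) justifies the two-stage sampling that the paper only asserts; one small wording fix is that $V$ is chosen uniformly \emph{given} $F$, not independently of it.
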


\begin{proof}
    Fix $U\in \Gr(n,d-1)$ and note that there are $2^{n-d+1}$ affine $(d-1)$-flats parallel to $U$ given by $x+U$ where $x\in \F_2^n/U$. Then for convenience, define for each $x\in \F_2^n/U$ the value $p(x)\in \bin$ to be the parity of $|A\cap (x+U)|$, i.e. $|A\cap (x+U)|\pmod{2}$. 

    Notice that for a $d$-flat $F = (x+U)\sqcup (y+U)$ for distinct $x,y\in \F_2^n/U$, we have that $|A\cap F|\equiv p(x)+p(y) \pmod{2}$ and thus $|A\cap F|$ is even if and only if $p(x) = p(y)$. Let $m$ be the number of $x\in \F_2^n/U$ such that $p(x) = 1$, i.e. the number of affine $(d-1)$-flats parallel to $U$ that intersect $A$ in an odd number of points and for convenience let $N = 2^{n-d+1}$. We sample a uniformly random $d$-flat $F$ by first sampling a uniformly random $U\in\Gr(n,d-1)$, then sample distinct $x,y\in \F_2^n/U$, and setting $F = (x+U)\sqcup (y+U)$. Then we have
    \begin{align*}
        \P[|A\cap F|\text{ is even} \mid U] &= \frac{1}{\binom{N}{2}}\cdot \lpr{\binom{m}{2} + \binom{N-m}{2}} = \frac{m(m-1) + (N-m)(N-m-1)}{N(N-1)} \\
        &= \frac{N(N-1)-2m(N-m)}{N(N-1)}
        \ge \frac{1}{2} - \frac{1}{2(N-1)}.
    \end{align*}
    Thus we have 
    \[\P[|A\cap F|\text{ is odd} \mid U]\le \frac{1}{2} + \frac{1}{2(2^{n-d+1}-1)}.\]
    Averaging over all possible $U$, we obtain the desired result.
\end{proof}
\cref{thm:s-odd} follows directly since for odd $s$ and any $A\subseteq \F_2^n$ we have
\[\lambda^*(n,d,s,A) = \P_{F\in \Aff(n,d)}[|A\cap F| = s]\le \P_{F\in \Aff(n,d)}[|A\cap F| \text{ is odd}] \le \frac{1}{2} + \frac{1}{2(2^{n-d+1}-1)}.\]
Taking $n\to \infty$ shows that $\lambda^*(d,s)\le 1/2$.

\begin{remark}\label{rmk:cube-odd}
Note that \cref{lem:s-odd} does not hold in the axis-aligned cube setting. In particular, one can construct $A\subseteq \F_2^n$ such that every $d$-subcube intersects $A$ in an odd number of points. Consider the degree-$d$ symmetric polynomial over $\F_2$ defined by
\[f(x) = \sum_{T\in \binom{[n]}{d}}\prod_{i\in T}x_i.\]
Let $A = \{x\in \F_2^n\mid f(x) = 1\}$. Fix any $d$-subcube $Q\subseteq \F_2^n$ and note that it can be written as 
\[Q = \{a+u_1e_{i_1}+\dots + u_de_{i_d}\mid u_1,\dots, u_d\in \F_2\}\]
with distinct $i_1,\dots, i_d\in [n]$ and $a\in \F_2^n$ satisfying $a_{i_1} = \dots = a_{i_d} = 0$. We claim that in $\F_2$, we have
\[\sum_{u\in \F_2^d}f(a+u_1e_{i_1}+\dots + u_de_{i_d}) = 1.\]
For convenience, we denote for each $T\in \binom{[n]}{d}$, its corresponding monomial as $g_T(x) = \prod_{i\in T}x_i$. We compute the contribution from each monomial. Note that if $T\ne \{i_1,\dots, i_d\}$, then there exists $r\in [d]$ such that $i_r\not\in T$. Therefore summing over $u\in \F_2^d$, each value appears twice and thus contribute $0$ to the sum over $\F_2$. In the case where $T = \{i_1,\dots, i_d\}$, since $a_{i_1} = \dots = a_{i_d} = 0$, we have $g_T(a+u_1e_{i_1}+\dots + u_de_{i_d}) = u_1\dots u_d$ which takes value $1$ only when $u_1 = \dots = u_d = 1$. This proves the claim and thus each $d$-subcube contains an odd number of points from $A$.
\end{remark}

Finally, we present the proof of \cref{thm:one-point}.

\begin{proof}[Proof of \cref{thm:one-point}]
    Fix $0\le s\le 2^d$ and let $A\subseteq \F_2^n$ be a subset that achieves $\lambda^*(n,d,s,A) = \lambda^*(n,d,s)$. 

    For $2\le s\le 2^d$, consider a subset $A'\subseteq A$ obtained from randomly deleting each point in $A$ with probability $1/s$ independently. Then for a uniformly random $d$-flat $Q$, we have 
    \[\P[|A'\cap Q| = s-1 \mid |A\cap Q| = s] = s\cdot \frac{1}{s}\cdot \lpr{1-\frac{1}{s}}^{s-1}=  \lpr{1-\frac{1}{s}}^{s-1} =  \lpr{\frac{s-1}{s}}^{s-1}.\]
    Thus we have
    \[\P[|A'\cap Q| = s-1] \ge \lpr{\frac{s-1}{s}}^{s-1} \cdot \P[|A\cap Q| = s] = \lambda^*(n,d,s)\lpr{\frac{s-1}{s}}^{s-1}.\]
    Fixing an outcome of $A'$ that achieves the above inequality, we can conclude that 
    \[\lambda^*(n,d,s-1)\ge \lambda^*(n,d,s-1, A') \ge \lambda^*(n,d,s)\lpr{\frac{s-1}{s}}^{s-1}.\]
    Taking $n\to \infty$ implies the desired statement.

    Similarly, for $0\le s\le 2^d-2$, consider the set $A''\supseteq A$ obtained from randomly adding each point in $\F_2^n\setminus A$ to $A$ with probability $1/(2^d-s)$ independently. Then for a uniformly random $d$-flat $Q$, we have
    \[\P[|A''\cap Q| = s+1 \mid |A\cap Q| = s] =  \lpr{1-\frac{1}{2^d-s}}^{2^d-s-1} = \lpr{\frac{2^d-s-1}{2^d-s}}^{2^d-s-1}.\]
    Thus we have
    \[\P[|A''\cap Q| = s+1] \ge  \lpr{\frac{2^d-s-1}{2^d-s}}^{2^d-s-1} \cdot \P[|A\cap Q| = s] = \lambda^*(n,d,s)\lpr{\frac{2^d-s-1}{2^d-s}}^{2^d-s-1}.\]
    Fixing an outcome of $A''$ that achieves the above inequality, we can conclude that 
    \[\lambda^*(n,d,s+1)\ge \lambda^*(n,d,s+1, A'') \ge \lambda^*(n,d,s)\lpr{\frac{2^d-s-1}{2^d-s}}^{2^d-s-1}.\]
    Taking $n\to \infty$ implies the desired statement.
\end{proof}

\section{Concluding remarks}\label{sec:concluding-remarks}

In this paper, we study the intersection statistics of affine subspaces over $\F_2^n$ as a natural coordinate-invariant analogue of the axis-aligned cube statistics introduced by Alon, Axenovich, and Goldwasser~\cite{alon2024hypercubestatistics}. By replacing the axis-aligned $d$-subcubes with all affine $d$-flats, the problem acquires full affine symmetry. This change turns out to have substantial consequences for the behavior of intersection statistics. In this section, we present some natural follow-up questions as well as discussions for the flat statistics versus the cube statistics.

\subsection{Determine exact values for $\lambda^*(d,s)$}

Currently the only cases for $s$ where $\lambda^*(d,s)$ is completely determined are when $s\in \{0,2^{d-1},2^d\}$, so determining the exact values for $\lambda^*(d,s)$ for most values of $s$ remains an interesting open problem. In \cref{thm:s-even}, we showed that for $s = j\cdot 2^k$ with $j$ odd and $k\ge 1$, we have
\[\lambda^*(d,s) = 1- \Theta(2^{-k}),\]
which determines $1-\lambda^*(d,s)$ for all even $s$ up to an absolute constant. In particular, the lower bound is roughly $1-2^{-k}$ and the upper bound is roughly $1-\frac{2}{3}\cdot 2^{-k}$ as $d\to \infty$, so the constants in front of the $2^{-k}$ term differ by a multiplicative factor of $2/3$. It remains an interesting question to determine the exact leading constant. We conjecture that the lower bound is the correct answer.

\begin{conj}\label{conj:flat-s-even}
    For $d\ge 1$ and $s = j\cdot 2^k$ with $j$ odd and $1\le k\le d$, we have
    \[\lambda^*(d,s) = (1+o_d(1))c(d,k).\]
\end{conj}
We consider the case where $k$ is fixed and $d\to \infty$ in the above conjecture. We suspect that in order to prove \cref{conj:flat-s-even}, one would need to prove a strong structural statement that the extremizer of $\lambda^*(d,s)$ must be close to a union of $j$ flats of codimension $d-k$. Our current proof does not give any structural information on the set $A\subseteq\F_2^n$ achieving $\lambda^*(n,d,s)$, so new ideas are required.

In the case where $s$ is odd, we currently have the general bound 
\[0.2887\le \lambda^*(d,s) \le 0.5.\]
In the case where $s = 2^k\pm 1$ for $k$ large, \cref{cor:one-point-lb} implies that $\lambda^*(d,s)\ge c(d,k)/e\ge (1-2^{-k})/e$ which approaches $1/e$ as $k\to \infty$. However, there remains a big gap in the current lower bound and upper bound for all values of odd $s$. Similar to the axis-aligned setting conjectured in \cite{alon2024hypercubestatistics}, we conjecture that for $s = 1$, the optimal construction should be Poisson, which is also consistent with \cref{thm:one-point}.

\begin{conj}\label{conj:flat-s-1}
    For $d\ge 1$, we have
    \[\lambda^*(d,1) = (1+o_d(1))\cdot \frac{1}{e}.\]
\end{conj}

\subsection{Affine flat statistics versus cube statistics}

Our study also provides insights into the hypercube statistics problem. Although we showed that the 2-adic valuation of $s$ controls $\lambda^*(d,s)$, several observations imply that the same cannot be true in the axis-aligned cube setting. First, even for parity, analogous behavior fails in the axis-aligned cube setting as illustrated in \cref{sec:s-odd}. We also point out two constructions observed in \cite{alon2024hypercubestatistics} that give evidence that one should exploit the fixed coordinates in the cube statistics problem.

In \cite{alon2024hypercubestatistics}, it was observed that if one takes $A\subseteq Q_n$ consisting of every third layer of $Q_n$, then in the case where $s\in \{\ceil{2^d/3}, \floor{2^d/3}\}$, we have $\lambda(d,s)\ge 2/3-o_d(1)$. This is a significantly better lower bound than $0.2887$ in the case when $s$ is odd. Moreover, this surpasses the $1/2$ upper bound for $\lambda^*(d,s)$ for these values of $d$ and $s$, suggesting very different behaviors for $\lambda^*(d,s)$ and $\lambda(d,s)$.

Another example observed in \cite{alon2024hypercubestatistics} is the following. When $s = 2^{d-2}$, consider $A = B^{-1}(x)$ where $B: \F_2^n\to \F_2^2$ and $x\in \F_2^2$. If we take $B$ to be described by a random $2\times n$ matrix with independent nonzero column vectors in $\F_2^2$, then notice that $\lambda(n,d,s,A)$ is at least $1-3^{1-d}$ and thus we get that $\lambda(d,2^{d-2})\ge 1-3^{1-d}$. This is an asymptotically better lower bound than $\lambda^*(d,2^{d-2}) = c(d,d-2)$ which is on the order of $1 - 2^{-(d-2)}$. These examples suggest that the cube statistics is heavily dependent on the fixed coordinates and exhibits genuinely different behavior than affine flat statistics.

\printbibliography

\end{document}